\documentclass{amsart}
\usepackage{amsmath,amssymb,amsthm,enumerate,multicol,graphicx,mathtools,enumitem,xcolor,hyperref}

\usepackage{appendix} 
\usepackage{listings} 

\usepackage{soul} 


\usepackage[margin=1in]{geometry}
\theoremstyle{plain}
\newtheorem{theorem}{Theorem}
\numberwithin{theorem}{section}

\newtheorem{proposition}[theorem]{Proposition}
\newtheorem{lemma}[theorem]{Lemma}

\theoremstyle{definition}

\newtheorem{example}[theorem]{Example}

\newcommand{\R}{\mathbb{R}}
\newcommand{\col}{\text{Col}^{R}_X}
\newcommand{\colA}{\text{Col}^{R}_{X_A}}
\newcommand{\Tcal}{\mathcal{T}}
\newcommand{\Ical}{\mathcal{I}}
\newcommand{\Scal}{\mathcal{S}}
\newcommand{\Mcal}{\mathcal{M}}

\newcommand{\Bcal}{\mathcal{B}}
\newcommand{\MV}{\mathcal{G}}


\title{Region colorings of surfaces in 4-space}
\author[]{Rom\'an Aranda, Noah Crawford, Andrew Maas, Nicole Marienau, Erica Pearce,\\ Renzo Sarreal, Savannah Schutte, Ransom Sterns, and Eric Woods}

\date{}

\begin{document}

\begin{abstract}
Niebrzydowski introduced a theory of region colorings for surface links. In this paper, we translate the coloring invariant to the context of triplane diagrams and movies of knots. We provide inequalities between the number of region colorings and topological quantities of $F$, such as the number of saddles in a movie and the bridge index of a triplane diagram of $F$. As an application, we show that Yoshikawa's 2-knots $9_1$ and $10_2$ are non-invertible; that is $F\not=-F$.
\end{abstract}

\maketitle

\section{Introduction}
One of the first ideas we see in a knot theory course is that \emph{coloring} the strands in a knot diagram is a useful technique to distinguish knots apart. Instead of coloring arcs, one would want to define a knot invariant by coloring the planar complementary regions in a knot diagram. Niebrzydowski introduced an algebraic object that can be used for such a purpose~\cite{Niebrzydowski_tribracket_1}. \emph{Knot-theoretic ternary quasigroups}, also called \emph{tribrackets}, are sets equipped with a \emph{triple product} that resembles the Reidemester moves for knots; see Figure~\ref{fig:R3}. Counting tribracket colorings of a knot diagram yields a knot invariant, denoted by $\col(K)$, that mimics the theory of quandles used to color the strands of knots. In recent years, tribrackets have been extensively studied and generalized to other diagrams of 1-dimensional knots, and robust polynomial and homological invariants have been introduced~\cite{Nelson_Polynomial, Trivalent_tribrackets, Niebrzydowski_Flocks, Tribracket_Modules, Virtual_Tribrackets, Local_Biquandles, Tribracket_Spatial, Niebrzydowski_tribracket_2}. 

\begin{figure}[h]
\centering
\includegraphics[width=.7\textwidth]{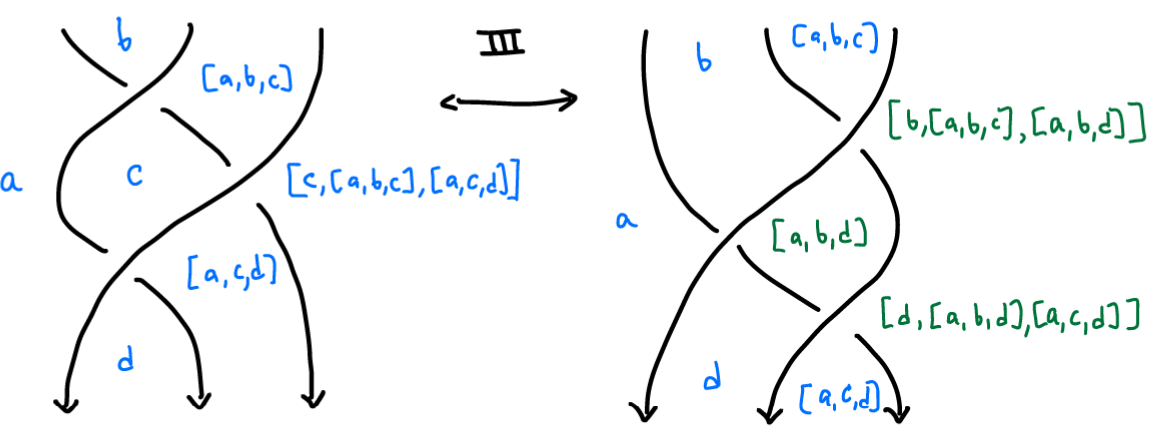}
\caption{Reidemeister III move and the tribracket equations.}
\label{fig:R3}
\end{figure}

One dimension up, the knotted objects are surfaces. Niebrzydowski introduced a region coloring invariant for surfaces in 4-space, denoted by $\col(F)$, using tribrackets and broken surface theory~\cite{Niebrzydowski_knotted_surfaces}. There are more ways to represent surfaces in 4-space, such as movies of knots, marked vertex diagrams, braid charts, and triplane and multiplane diagrams~\cite{book_surfaces_in_4space, MZ_bridge_trisections_S4, AE24_Multisections}. In this paper, we explore Niebrzydowski's tribracket coloring invariant for surfaces represented in some of these ways. We present alternative proofs of the isotopy invariance in each setup. We study relations between $\col(F)$ and topological quantities of $F$, such as the number of saddles in a movie (Prop~\ref{prop:movie_inequality_saddle}) and the bridge index of $F$ (Prop~\ref{prop:triplane_inequality_saddle}). We also provide formulae for $\col(F)$ in two situations: when $X$ is an abelian tribracket and $F$ is a spun knot. We refer to Section~\ref{sec:computations} for definitions of the concepts below. 

\begin{theorem}\label{thm:abelian_trib}
Let $A$ be a finite abelian group with associated Dehn tribracket denoted by $X_A$. For any orientable surface-link $F\subset S^4$, the number of $X_A$-colorings is equal to 
\[ 
\text{Col}_{X_A}^R(F)=|A|^{|F|+1}.
\]
\end{theorem}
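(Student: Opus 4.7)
The plan is to show that for the Dehn tribracket $X_A$ associated to an abelian group $A$, every valid $X_A$-coloring of $F$ is determined by two pieces of data: the color of a fixed base region of the ambient space, and one element of $A$ per connected component of $F$. I would work from a broken surface diagram of $F$ in $\mathbb{R}^3$, the setting in which Niebrzydowski defines the region coloring invariant.

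First I would unpack the Dehn tribracket operation on $A$, which takes the form $[a,b,c] = a - b + c$ up to convention, and translate the coloring rule at a double arc into the statement \emph{``the difference of colors across any sheet of $F$ depends only on that sheet, not on the sheet crossing over it.''} This linearization is what singles out the abelian case, and orientability of $F$ enters here by letting me choose a consistent co-orientation on each component $F_i$, so that the signed color jump is well defined. I would then define the \emph{flux} of $F_i$ to be the common element $x_i \in A$ by which the color increases upon crossing $F_i$ in the positive normal direction. Verifying well-definedness of $x_i$ reduces at each double curve to the coloring rule, and at each triple point to the full tribracket axiom; path-connectedness of $F_i$ then propagates local constancy to a global flux.

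Finally I would exhibit the bijection
\[
\{X_A\text{-colorings of } F\} \longleftrightarrow A \times A^{|F|}, \qquad c \longmapsto \bigl(c(R_0),\, x_1, \ldots, x_{|F|}\bigr),
\]
where $R_0$ is a fixed base region and $x_i$ is the flux of the $i$-th component of $F$. The inverse assigns to an arbitrary region $R$ the color $c(R_0) + \sum_i n_i(\gamma)\, x_i$, where $\gamma$ is any path from $R_0$ to $R$ transverse to $F$ and $n_i(\gamma)$ is the signed intersection count of $\gamma$ with $F_i$; path-independence follows because each $F_i$ is a closed orientable surface in the simply connected space $\mathbb{R}^3$, so signed intersection numbers depend only on the endpoints of $\gamma$. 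I expect the main obstacle to be the triple-point verification: one has to check that the linearized tribracket axiom really matches the linearity of signed intersection numbers in every local configuration of a triple point, thereby confirming that the flux is a well-defined $A$-valued function on the set of components of $F$.
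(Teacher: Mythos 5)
Your argument is correct, but it is genuinely different from the one in the paper. The paper works with a triplane diagram of $F$ and invokes Corollary~1.2 of Meier--Taylor--Zupan (\cite{MTZ_cubic_graphs}): every triplane diagram can be turned into one of an unknotted surface by crossing changes and interior Reidemeister moves. Since $[a,b,c]=b-a+c=[a,c,b]$ for an abelian Dehn tribracket, the coloring equation at a crossing is blind to which strand is on top, so crossing changes preserve the count, and the answer is then read off from the unknotted case (Lemma~\ref{prop:colorings_unknotted_surface}). Your route stays in the broken-surface-diagram setting and replaces the unknotting theorem by a direct ``flux'' computation: the abelian rule at a double curve says exactly that the color jump across a sheet is unchanged when that sheet meets another, so the jump is locally constant and hence constant on each connected component $F_i$, and the coloring is recovered from a base color and the fluxes via signed intersection numbers, which are path-independent because the image of each closed oriented $F_i$ is null-homologous in $\mathbb{R}^3$. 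What your approach buys is self-containedness (no appeal to the crossing-change unknotting result) and a conceptual explanation of the exponent $|F|+1$ as one flux per component plus one base color; what the paper's approach buys is brevity and the reuse of machinery it has already set up for triplane diagrams. Two small points you should still nail down: (i) branch points, where a double curve ends --- there the two local sheets are joined around the Whitney umbrella, and you must check (it is immediate for the abelian Dehn rule) that no extra condition is imposed and the two local jumps agree, so the flux extends over branch points; and (ii) your worry about triple points is overcautious --- the coloring conditions in a broken surface diagram are imposed only along double curves, and local constancy of the jump on each quadrant of a sheet near a triple point already follows from the double-curve relations on the two arcs passing through it.
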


\begin{theorem}\label{thm:spun_knots}
Let $K$ be an oriented knot in $S^3$. There is a bijection between the number of tribracket colorings of $K$ and that of the spun of $K$, $\Scal(K)$. In particular, if $X$ is a finite tribracket, then 
\[ 
\col(K) = \col(\Scal(K)).
\]
\end{theorem}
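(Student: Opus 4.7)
The plan is to realize $\Scal(K)$ as a surface of revolution from a 2D diagram of $K$, producing a broken surface diagram whose coloring data matches that of $D_K$.

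First, I would fix a regular diagram $D_K \subset \R^2$ of $K$ and position $K$ in $\R^3$ so that, apart from a small unknotted arc $K''$ making a local dip below a chosen axis line $L \subset \R^2$, the rest $K' = K \setminus K''$ lies in the upper half-plane with its two endpoints on $L$. The spun knot $\Scal(K) \subset \R^4$ then admits a broken surface diagram $\Sigma \subset \R^3$ obtained by rotating the half-plane diagram $D_{K'}$ around $L$ through a fourth dimension and projecting out. Each crossing of $D_K$ (all of which lie in $D_{K'}$ by construction) sweeps to a disjoint double point circle of $\Sigma$, and no triple points appear, since distinct crossings produce disjoint circles, either at distinct heights along $L$ or nested at the same height.

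Second, I would set up a bijection between 3D regions of $\R^3 \setminus \Sigma$ and 2D regions of $\R^2 \setminus D_K$. Let $P$ denote the closed upper half-plane and let $\pi : \R^3 \to P$ be the quotient by the $S^1$-action of rotation around $L$, explicitly $(x_1, x_2, x_3) \mapsto (x_1, \sqrt{x_2^2 + x_3^2})$; then $\Sigma = \pi^{-1}(D_{K'})$. Since the fibers of $\pi$ are connected (either an $S^1$ or a single axis point), $\pi$ induces a bijection between the regions of $\R^3 \setminus \Sigma$ and the regions of $P \setminus D_{K'}$. Provided $K''$ is chosen as a small local dip, so that $L$ does not disconnect any region of $D_K$, each region of $D_K$ meets $P$ in a single connected component; this yields a bijection with the regions of $D_{K'}$ in $P$, and an Euler-characteristic count on both sides confirms there are $c+2$ regions, where $c$ is the number of crossings of $D_K$.

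Finally, I would verify that under this region bijection the broken surface tribracket condition at each double point circle of $\Sigma$ coincides with Niebrzydowski's tribracket relation at the corresponding crossing of $D_K$: locally around a generic point of a double circle the four adjacent 3D regions are precisely the rotations of the four 2D regions at the crossing, with the over/under information preserved. Thus an $X$-coloring of $D_K$ lifts uniquely to an $X$-coloring of $\Sigma$ and vice versa, yielding a bijection of coloring sets and the equality $\col(K) = \col(\Scal(K))$. The main obstacle is the careful verification of the region bijection, which depends on choosing the dip placement so that $L$ does not split any region of $D_K$; once that is handled, the matching of tribracket conditions is automatic from the local picture.
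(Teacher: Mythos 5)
Your proposal is correct, but it takes a genuinely different route from the paper. You work directly in Niebrzydowski's original setting: you build the broken surface diagram of $\Scal(K)$ as the surface of revolution of the arc diagram $D_{K'}$, observe that its singular set consists of disjoint double-point circles (one per crossing, no triple or branch points), identify the complementary $3$-dimensional regions with the planar regions of $D_K$ via the quotient by the rotation, and match the double-point rule of Figure~\ref{fig:broken_rules} with the crossing rule of Figure~\ref{fig:tribracket_equations}; invariance then comes from Theorem~\ref{lem:broken_invariant}. The paper instead uses the Meier--Zupan triplane diagram of $\Scal(K)$ built from a plat position of $K$ (Figure~\ref{fig:spun_knot}): all three tangles contain the same braid box as the plat diagram, so colorings are matched through the boundary data of the tangles via Lemma~\ref{lem:colors_of_trivial_tangle}, and invariance comes from Proposition~\ref{prop:invariance_triplane_moves}. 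Your argument is more intrinsic and makes the mechanism transparent (the spun surface literally remembers the diagram of $K$), and it avoids the plat/triplane machinery; the paper's argument has the advantage of producing the explicit triplane diagrams and coloring equations reused in Section~\ref{sec:computations}. Two points in your sketch deserve the care you partially flag: first, the connectedness of $R\cap P$ for each region $R$ of $\R^2\setminus D_K$ (only the unbounded region and the region above the dip meet the lower half-plane, and for a small local dip both intersections with $P$ are connected, so the count $c+2$ on both sides is indeed realized by a bijection); second, the claim that the matching of coloring conditions is ``automatic'' hides the check that the condition imposed along a double circle is the same at antipodal angles, where the local crossing appears mirrored with reversed co-orientation --- this is exactly the symmetry recorded in Figure~\ref{fig:mirror}, so it goes through, but it should be said.
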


The final application of this work concerns non-invertible surface links, those with $ F \ neq -F$. In \cite{Niebrzydowski_knotted_surfaces}, Niebrzydowski used 2-cocycles of tribrackets to show that the 2-twist spin of the link $L6a5$ (in Thistlethwaite's notation) is non-invertible. In Section~\ref{sec:computations}, we use region colorings to do the same for two 2-knots from Yoshikawa's table~\cite{Yoshi94_enumeration}. 

\begin{theorem}\label{thm:noninvertible}
The 2-knots $9_1$ and $10_3$ are non-invertible.
\end{theorem}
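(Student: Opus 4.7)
The plan is to use the region coloring counts as an orientation-sensitive obstruction. Since $\col(F)$ is an invariant of the oriented surface-link $F$, any identity $F=-F$ forces $\text{Col}^R_X(F)=\text{Col}^R_X(-F)$ for every finite tribracket $X$. Consequently, to prove non-invertibility of $9_1$ and $10_3$ it suffices to exhibit, for each of these 2-knots, a single finite tribracket $X$ with $\text{Col}^R_X(F)\neq \text{Col}^R_X(-F)$.

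First, I would fix explicit combinatorial presentations of the two 2-knots from Yoshikawa's table. Most naturally, one starts from the marked vertex diagrams, which can be converted into either a movie or a triplane diagram by the procedures set up earlier in the paper. The reversed surface $-F$ then admits a concrete diagram obtained by reversing time in the movie (so each saddle is crossed in the opposite direction and births/deaths swap roles), or equivalently by applying the standard orientation-reversing operation to the marked vertex diagram. This produces two explicit diagrams $D_F$ and $D_{-F}$ on which the counting procedures of Section~\ref{sec:computations} can be run directly.

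Next, I would search for a small finite tribracket $X$ that separates the two counts. The tribracket axioms are local equations on a finite set, so for each small order $n$ one can enumerate candidate ternary operations on $\{1,\dots,n\}$ satisfying the tribracket laws. For each candidate $X$ and each of $D_F$, $D_{-F}$, the region-coloring condition becomes a finite system of equations in $X$ whose solutions can be counted by exhaustive assignment over the bounded regions. A single $X$ for which the two totals disagree certifies that $F\neq -F$; the final proof is then presented as a short table of these counts, with the tribracket operation table recorded explicitly so that the reader can verify the computation.

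The main obstacle is locating a witnessing tribracket. By Theorem~\ref{thm:abelian_trib}, no Dehn tribracket $X_A$ coming from a finite abelian group can work, since $|X_A|^{|F|+1}$ depends only on $|A|$ and on the orientation-invariant quantity $|F|$. The search must therefore be aimed at genuinely non-abelian tribrackets, and the combinatorial cost grows quickly both in the number of tribrackets of a given order and in the number of regions that must be colored in the diagrams of $\pm F$. Managing this computation in practice, for instance by pruning the tribracket enumeration up to isomorphism and by exploiting the local propagation of region colors across each crossing and saddle, is the principal technical burden; once a suitable $X$ is found for each of $9_1$ and $10_3$, the theorem follows by comparing the two integer counts.
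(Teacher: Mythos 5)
Your proposal follows essentially the same route as the paper: treat $\col(F)$ as an oriented invariant, note that abelian Dehn tribrackets cannot detect inversion, and exhibit a small non-abelian tribracket (the paper uses a specific one of order $3$) for which an exhaustive count of region colorings of explicit diagrams of $F$ and $-F$ disagrees ($25$ vs.\ $21$ for $9_1$ and $14$ vs.\ $10$ for $10_3$). The only cosmetic difference is that the paper runs the computation on triplane diagrams rather than marked vertex diagrams or movies, which by the equivalences established earlier changes nothing.
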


\subsection{Outline of the paper.} Section~\ref{sec:prelims} reviews the theory of tribracket colorings for knots and surfaces in dimensions three and four, respectively. Sections~\ref{sec:movies} and \ref{sec:triplane_diagrams} translate $\col(F)$ for movies and triplane diagrams, respectively. Each section aims to provide a brief introduction to each diagrammatic theory. Section~\ref{sec:computations} contains proofs of Theorems~\ref{thm:abelian_trib} and \ref{thm:spun_knots} and a list of coloring equations necessary to compute $\col(F)$ for the knotted 2-spheres in Yoshikawa's surface-link table~\cite{Yoshi94_enumeration}. Section~\ref{sec:questions} lists questions and directions for future work. 

\subsection{Acknowledgments.} This project is the outcome of the January 2025 class MATH 398 - Research Experience in Mathematics at the University of Nebraska-Lincoln. RA thanks Alex Zupan for his advice during the course planning process.

\section{Niebrzydowski tribrackes}\label{sec:prelims}

A \emph{knot-theoretic ternary quasigroup (KTQ)} or \emph{Niebrzydowski tribracket} is a set $X$ equiped with a function $[-,-,-]:X\times X\times X \mapsto X$ satisfying the following conditions for any subset $\{a,b,c,d\}\subset X$, 
\begin{enumerate}
\item Any three elements uniquely determine the fourth such that $[a,b,c]=d$,
\item 
$\left[b,[a,b,c],[a,b,d]\right]=\left[c,[a,b,c],[a,c,d]\right]=\left[d,[a,b,d],[a,c,d]\right]$.

\end{enumerate}
The tribracket definition can be motivated by physics and Reidemeister moves of knots~\cite{Niebrzydowski_tribracket_1}. For instance, equations in (2) above can be explained by the third Reidemeister move in Figure~\ref{fig:R3}. An example of a tribracket structure is the \emph{Dehn tribracket} of a group $(G,\cdot)$ defined by $[a,b,c]=b\cdot a ^{-1}\cdot c$. The Dehn tribracket resembles relations in the Dehn presentation of the knot group. For a finite set $X$, we can describe its tribracket structure using a 3-tensor (array of $n\times n$ matrices). Below, we write an example from~\cite[Ex 10]{Tribracket_Modules} of a tribracket structure on the set $\{1,2,3,4\}$.
\[
\begin{bmatrix}
\begin{bmatrix}
4 & 3 & 2 & 1\\
2 & 4 & 1 & 3\\
3 & 1 & 4 & 2\\
1 & 2 & 3 & 4
\end{bmatrix}
& 
\begin{bmatrix}
3 & 1 & 4 & 2\\
4 & 3 & 2 & 1\\
1 & 2 & 3 & 4\\
2 & 4 & 1 & 3
\end{bmatrix}
&
\begin{bmatrix}
2 & 4 & 1 & 3\\
1 & 2 & 3 & 4\\
4 & 3 & 2 & 1\\
3 & 1 & 4 & 2
\end{bmatrix}
&
\begin{bmatrix}
1 & 2 & 3 & 4\\
3 & 1 & 4 & 2\\
2 & 4 & 1 & 3\\
4 & 3 & 2 & 1
\end{bmatrix}
\end{bmatrix}
\]

\subsection{Region colorings of knots in 3-space}
 
A link is an embedding of a disjoint union of circles into $\R^3$. A link with one connected component is called a knot. We are interested in knots and links up to isotopy in $\R^3$. A link projection (or link diagram) is a four-valent planar graph with crossing information at each vertex. Link diagrams determine embeddings of links in 3-space by embedding the associated graph in a 2-dimensional plane and pushing the edges near the vertices according to the crossing data. It is a classical result that any two link diagrams representing isotopic links differ by a sequence of Reidemeister moves~\cite{Reidemeister_moves}; see Figure~\ref{fig:Reid_moves}. 
\begin{figure}[h]
\centering
\includegraphics[width=.3\textwidth]{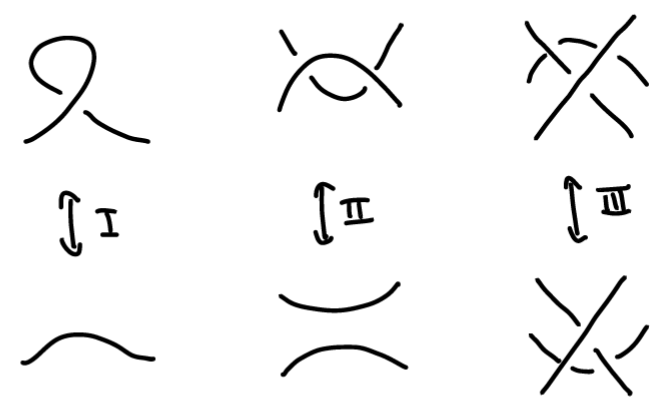}
\caption{Reidemeister moves.}
\label{fig:Reid_moves}
\end{figure}

Let $X$ be a tribracket, and let $D$ be an oriented link diagram. A \emph{region coloring} of $D$ is an assignment of elements of $X$ to the complementary regions of $D$; we will refer to the labels of each region as \emph{color}. A \emph{tribracket coloring} is a region coloring of $D$ so that the colors near each crossing satisfy the equations in Figure~\ref{fig:tribracket_equations}. Niebrzydowski introduced such coloring rules and showed that they induce link invariants. If $X$ is a finite tribracket, $\col(D)$ denotes the number of tribracket colorings of a diagram $D$.

\begin{lemma}[Lem 3.10 of \cite{Niebrzydowski_knotted_surfaces}]\label{lem:col_IS_knot_diagram}
    Reidemeister moves induce bijections between the sets of tribracket colorings of a link diagram. In particular, $\col(D)=\col(D')$ for link diagrams representing isotopic oriented links. 
\end{lemma}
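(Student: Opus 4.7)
The plan is to apply the Reidemeister theorem and verify the lemma one move at a time. Since each Reidemeister move is supported in a small disk $B$ disjoint from the other crossings of $D$, colorings on $D\setminus B$ are unaffected by the move. It therefore suffices to show that for each Reidemeister move, any coloring of the exterior regions (those meeting $\partial B$) extends uniquely to a valid tribracket coloring of the local picture on both sides of the move, which immediately yields the desired bijection.

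For R2, the plan is to color the two exterior regions with arbitrary elements $a,b\in X$ and use axiom (1) to determine uniquely the colors of the two bounded regions created between the two crossings, then check that the equations at the two crossings are automatically compatible. For R3, the six local regions can be labeled so that three outer regions are free; axiom (1) applied at each of the three crossings then forces the colors of the remaining regions. The central region of the move receives two a priori different assignments, one from each side of the move, and the equality of these two assignments is exactly the content of axiom (2). This is why axiom (2) takes the symmetric three-term form $\bigl[b,[a,b,c],[a,b,d]\bigr]=\bigl[c,[a,b,c],[a,c,d]\bigr]=\bigl[d,[a,b,d],[a,c,d]\bigr]$.

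For R1, a new crossing is introduced together with a monogon, and two of the four regions around that crossing coincide globally with a single exterior region. Axiom (1) still determines the color of the monogon, and the resulting self-consistency condition can be dispatched case by case by tracking crossing signs and orientations.

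The main obstacle is not conceptual but organizational: each of R2 and R3 has several subcases depending on strand orientations and crossing signs, and in each one must correctly identify which of the local labels play the roles $a,b,c,d$ in a defining equation $[a,b,c]=d$ at a crossing. Once the positional convention at a crossing is fixed (as prescribed by Figure~\ref{fig:R3} for R3), the verifications reduce to substitutions, and the uniqueness clause of axiom (1) ensures that the correspondences produced are bijections rather than merely maps.
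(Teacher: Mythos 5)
The paper offers no proof of this lemma at all --- it is imported verbatim as Lemma~3.10 of \cite{Niebrzydowski_knotted_surfaces} --- so there is no in-paper argument to compare against; your sketch is the standard move-by-move verification that the cited source carries out, and its overall structure is correct: fix the colors of the regions meeting $\partial B$, use axiom (1) to determine the interior regions uniquely on each side of the move, and observe that the only genuine consistency condition occurs at R3, where it is precisely axiom (2). Two bookkeeping points are off, though neither is fatal. For R2 the local disk has \emph{three} boundary region-classes (left, middle, right), not two, and the move creates only \emph{one} genuinely new region, the bigon: the middle region is pinched into two local pieces, but these remain connected outside $B$ and hence are a single region of the full diagram, so there is one new color to determine from the first crossing and one residual equation at the second crossing to verify is automatic. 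For R1 there is in fact no leftover ``self-consistency condition'' to dispatch: the two quadrants that coincide globally form an opposite pair at the new crossing, so in the relation $[a,b,c]=d$ they occupy either the slots $\{b,c\}$ or the slots $\{a,d\}$, and in every orientation and sign case the three known colors determine the monogon color uniquely by axiom (1), with nothing further to check. With those counts corrected, your outline is a complete and correct proof strategy.
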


\begin{figure}[h]
\centering
\includegraphics[width=.35\textwidth]{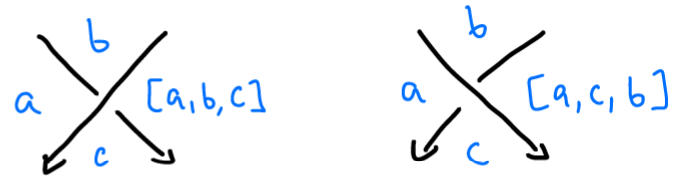}
\caption{Tribracket equations at a crossing.}
\label{fig:tribracket_equations}
\end{figure}

We end this subsection with three properties of tribracket colorings of link diagrams. An \emph{unlink} is a link isotopic to the disjoint union of embedded loops in the $xy$-plane. 

\begin{lemma}\label{lem:tribrackets_unlink}
    Let $X$ be a finite tribracket. If $L$ is an unlink, then $\col(L)=|X|^{|L|+1}$. 
\end{lemma}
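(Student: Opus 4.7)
The plan is to exploit the isotopy invariance in Lemma~\ref{lem:col_IS_knot_diagram} to reduce to a particularly simple diagram of $L$, one with no crossings, so that the region coloring count reduces to pure region counting.

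First, I would fix the standard diagram $D_0$ of the unlink $L$: $|L|$ pairwise disjoint round circles in the $xy$-plane. By Lemma~\ref{lem:col_IS_knot_diagram}, $\col(L)=\col(D_0)$, so it suffices to count tribracket colorings of $D_0$. Next, I would count the complementary regions of $D_0$ in the plane. Since the circles are disjoint and unnested, $D_0$ cuts the plane into $|L|+1$ regions: one bounded disk inside each component, together with the single unbounded exterior region. (Alternatively, an easy Euler characteristic / induction argument gives the same count, and the argument works even if the diagram's circles are drawn nested, since the number of regions of a planar diagram with $c$ crossings and $n$ link components satisfies the standard count $c+n+1$ when $c=0$.)

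Then, since $D_0$ has no crossings, the coloring equations from Figure~\ref{fig:tribracket_equations} impose no constraint. Thus every function from the set of regions of $D_0$ into $X$ is automatically a tribracket coloring, and the number of such functions is $|X|^{|L|+1}$. Combining with the first step yields $\col(L)=|X|^{|L|+1}$.

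The argument is essentially a bookkeeping exercise once isotopy invariance is in hand; I do not anticipate any obstacle. The only small subtlety is making sure the region count is correct irrespective of how one chooses to draw the unlink diagram, but this is a standard planar Euler characteristic computation (or one can simply insist on the disjoint, unnested model above).
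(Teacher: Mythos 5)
Your proposal is correct and matches the paper's argument: both reduce to a crossingless diagram via Lemma~\ref{lem:col_IS_knot_diagram}, observe that such a diagram has $|L|+1$ regions with no coloring constraints, and conclude $\col(L)=|X|^{|L|+1}$. No issues.
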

\begin{proof}
    A crossingless diagram of $L$ has $|L|+1$ independent regions, so it has $|X|^{|L|+1}$ distinct tribracket colorings. The result then follows from Lemma~\ref{lem:col_IS_knot_diagram}.
\end{proof}

It is worth noting that tribracket colorings can be defined for knotted strands, such as tangles and braids. The following lemma will be used later in our paper. 
\begin{lemma}\label{lem:region_cols_braids}
    Let $\beta$ be a $n$-stranded braid. Suppose that the strands of $\beta$ are oriented in some way, not necessarily all pointing in the same direction. Then, the region coloring near the bottom determines each tribracket coloring of $\beta$. 
\end{lemma}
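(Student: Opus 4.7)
The plan is to process the crossings of $\beta$ from bottom to top, using tribracket axiom (1) to pin down one new region color at each crossing. Draw $\beta$ vertically and perturb it so that the crossings $c_1, \ldots, c_k$ occur at pairwise distinct heights, ordered from bottom to top. Let $\beta_j$ denote the part of the braid rectangle lying weakly below the height of $c_j$. I claim, by induction on $j$, that every region of $\beta_{j-1}$ has its color uniquely determined by the given bottom region coloring; the base case $j=1$ is the hypothesis of the lemma, since $\beta_0$ consists only of the bottom slice.

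For the inductive step, focus on a neighborhood of the crossing $c_j$, which involves two adjacent strands. Locally, $c_j$ separates four regions: the lower region between the two strands, the two side regions (which extend down into $\beta_{j-1}$), and the upper region. The lower region and both side regions lie in $\beta_{j-1}$ and are therefore already colored. The tribracket equation imposed by $c_j$, read off from Figure~\ref{fig:tribracket_equations} according to the over/under and orientation data at $c_j$, takes the form $[a,b,c]=d$ for some assignment of the four local region colors to $a,b,c,d$. By axiom (1), any three entries of such an equation determine the fourth, so the color of the upper region is forced. All other regions of $\beta_j$ that do not already lie in $\beta_{j-1}$ either coincide with this newly determined upper region or are continuations of the side regions of $c_j$, so $\beta_j$ is now fully colored. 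Iterating up through $c_k$ colors every region of $\beta$, and each choice along the way was forced, giving the required uniqueness.

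The main obstacle to verify is that, regardless of how the two strands at $c_j$ are oriented and regardless of the over/under datum, the crossing really does impose an equation of the form $[a,b,c]=d$ whose four arguments are the four local region colors. This is precisely the content of the tribracket equations in Figure~\ref{fig:tribracket_equations}, so axiom (1) applies uniformly in every case and the induction goes through. In particular, the mixed-orientation hypothesis of the lemma causes no difficulty, since axiom (1) guarantees unique solvability in each of the four slots.
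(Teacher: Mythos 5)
Your proof is correct and follows essentially the same route as the paper's: order the crossings from bottom to top and use tribracket axiom (1) to force the one new region color at each crossing. The paper states this in two sentences; your version merely makes the induction explicit.
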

\begin{proof}
Condition (1) in the definition of tribracket implies that the colors of three regions near a crossing uniquely determine the coloring of the fourth crossing. Thus, a choice of region colorings near the bottom of a braid can be extended uniquely to a region coloring of the braid, one crossing at a time. 
\end{proof}

A \emph{tangle diagram} is a planar 4-valent graph with crossing information embedded in a disk $D^2$ with boundary a finite set of points in the boundary of $D^2$. Tangle diagrams describe knotted strands embedded in a 3-dimensional ball. In Section~\ref{sec:triplane_diagrams}, we will work with tangle diagrams up to isotopies that fix the endpoints of the tangle. For example, Reidemeister moves away from the endpoints of the tangle preserve the isotopy class of a tangle. All of our $b$-stranded tangle diagrams will have the same set of endpoints numbered $1,\dots, 2b$. A tangle diagram is \emph{trivial} if it can be isotoped while fixing its endpoints so that each strand of the new tangle contains a single maximum point with respect to the natural height function on the diagram. Figure~\ref{fig:triplane_to_marked_2} includes examples of three 4-stranded trivial tangles. For a trivial tangle $T$, the symbol $\overline{T}$ denotes the \emph{mirror image} of $T$, which is obtained by reflecting $T$ over the boundary of the disk. The mirror image of a $b$-stranded trivial tangle has the property that $T\cup \overline{T}$ is a link diagram for a $b$-component unlink. 

\begin{lemma}\label{lem:colors_of_trivial_tangle}
The tribracket colorings of a trivial tangle are determined by the colors of the regions near its boundary. 
\end{lemma}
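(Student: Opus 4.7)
The plan is to mimic the strategy of Lemma~\ref{lem:region_cols_braids}: put $T$ in a standard diagrammatic position and then propagate the boundary coloring one crossing at a time. Using isotopy invariance of tribracket colorings under Reidemeister moves in the interior of the disk (the tangle analogue of Lemma~\ref{lem:col_IS_knot_diagram}), I first replace $T$ by an isotopic diagram in which each strand has a single maximum with respect to the natural height function. Such moves leave the boundary data unchanged, so proving the lemma for the new diagram suffices.

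In this single-maximum form, I would sweep a horizontal line upward through the disk, starting below all strands. At the starting height, the regions met by the sweep line are precisely the regions adjacent to the boundary of the disk, and their colors are prescribed by hypothesis. As the sweep line rises, two generic events occur. At a crossing, the southern, western, and eastern regions around it have already been determined by the sweep, and axiom~(1) of a tribracket --- any three entries of a tribracket equation determine the fourth --- forces a unique color for the northern region above. At a local maximum of a strand, no new region appears: the region enclosed inside the arch simply terminates.

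Consequently every region of $T$ is born either at the bottom of the sweep (as a boundary region) or at the north side of some crossing, and in each case its color is forced by data already known. Thus a given boundary coloring admits at most one extension to a tribracket coloring of $T$, which is the content of the lemma.

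The step I expect to require the most care is the bookkeeping at local maxima, namely verifying that every region indeed falls into one of the two ``birth'' categories above. The single-maximum hypothesis is essential here: the region lying immediately above a maximum is globally connected through the disk to a region already visited by the sweep, so no genuinely new, undetermined region ever appears at a maximum.
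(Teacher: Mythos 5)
Your argument is correct and follows essentially the same route as the paper: both put the trivial tangle into a standard single-maximum position via interior Reidemeister moves (which induce bijections on colorings fixing the boundary data) and then propagate the boundary colors upward one crossing at a time using axiom (1). The only cosmetic difference is that the paper delegates the upward propagation to Lemma~\ref{lem:region_cols_braids} after arranging the diagram as a braid capped off by $\cap$-shaped arcs, whereas you carry out the sweep, including the check that maxima create no new regions, directly.
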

\begin{proof}
Let $T$ be a $b$-stranded trivial tangle diagram. By definition of a trivial tangle, we can perform interior Reidemeister moves so that the new diagram is formed by a $2b$-stranded braid in the bottom and with consecutive pairs of strings at the top being connected with $\cap$-shaped arcs. By Lemma~\ref{lem:region_cols_braids}, tribracket colorings of these new diagrams are determined by the colors near the boundary of the tangle. As Interior Reidemeister moves are supported away from the boundary of the tangle, they induce bijective correspondences between the tribracket colorings, and our claim holds.
\end{proof}

\subsection{Region colorings of surfaces in 4-space}

A surface-link is an embedding of a disjoint union of abstract surfaces into $\R^4$. In this paper, we will study oriented surface-links up to ambient isotopy in $\R^4$. One way to describe surface-links is to project the embedding onto some hyperplane $\R^3\times \{0\}\subset \R^4$ just as we do with knots in 3-space. A \emph{broken surface diagram} is an immersed surface in $\R^3$ with generic double points, triple points, and branch points described locally in Figure~\ref{fig:broken_picture}. Broken diagrams come equipped with \emph{crossing information}: broken arcs correspond to pieces of the surface-link that have been pushed towards $\R^3\times (-\varepsilon, 0)$. There is a list of seven local modifications of broken surface diagrams, called \emph{Roseman moves}, which are enough to relate any two broken diagrams describing the same surface in 4-space~\cite{Roseman_moves}. We refer the reader to~\cite{book_surfaces_in_4space} for more details on broken surface diagrams. 

\begin{figure}[h]
\centering
\includegraphics[width=.6\textwidth]{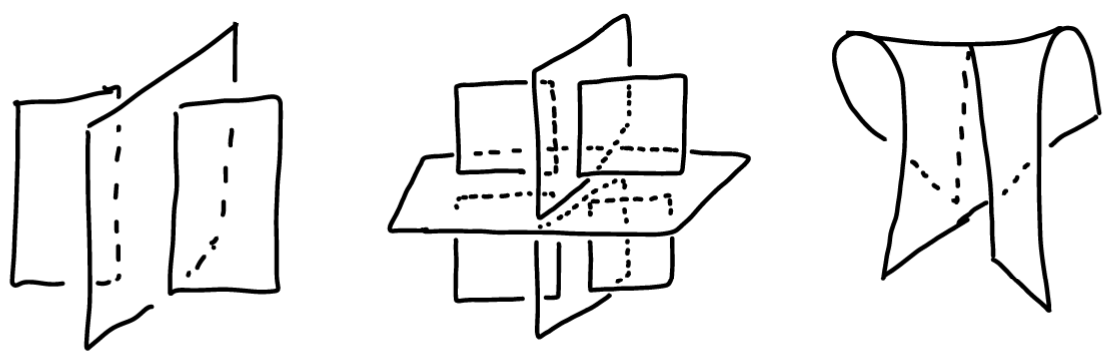}
\caption{Local models for double points, triple points, and branch points in a broken surface diagram.}
\label{fig:broken_picture}
\end{figure}

We will work with orientable surfaces in 4-space. In this case, an orientation of a broken surface diagram $\Bcal$ is a choice of a normal vector to each sheet of $\Bcal$ that matches when we pass through a double point as in Figure~\ref{fig:broken_rules}. Let $X$ be a tribracket and $\Bcal$ be an oriented broken surface diagram. A \emph{region coloring} of $\Bcal$ is an assignment of elements of $X$ to the 3-dimensional complementary regions $\Bcal$. A \emph{tribracket coloring} is a region coloring so that the colors near double points satisfy the equation in Figure~\ref{fig:broken_rules}. Nelson, Oshiro, and Oyamaguchi related Niebrzydowski tribracket colorings of broken diagrams with a different algebraic structure called \emph{local biquandles} \cite{Nelson_local_biquandles}. Niebrzydowski introduced such colorings and showed that the number of region tribracket colorings in a broken surface diagram, denoted by $\col(\Bcal)$, is a 4-dimensional isotopy invariant.

\begin{theorem}[Thm 3.18 of \cite{Niebrzydowski_knotted_surfaces}]\label{lem:broken_invariant}
    Roseman moves induce bijections between the sets of tribracket colorings of a broken surface diagram. In particular, $\col(\Bcal)=\col(\Bcal')$ for broken diagrams representing isotopic oriented surface-links. 
\end{theorem}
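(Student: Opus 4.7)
The plan is to verify, for each of the seven Roseman moves, that the local modification induces a bijection between tribracket colorings on either side. Since any two broken diagrams of isotopic surface-links are connected by a finite sequence of such moves, this will suffice. Throughout, I would use that each Roseman move is supported in a small $3$-ball of $\R^3$, so colorings of regions outside that ball are fixed; the task reduces to a purely local counting problem.

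The key observation --- an analog of Lemma~\ref{lem:region_cols_braids} one dimension up --- is an extension principle. Away from triple points and branch points, the double-point set of $\Bcal$ is a smooth $1$-manifold, and near any point of it a sufficiently small ball has exactly four complementary $3$-dimensional regions, with the coloring rule of Figure~\ref{fig:broken_rules} being a single equation of the form $[a,b,c]=d$. Axiom (1) of the tribracket definition says that any three of these four colors determine the fourth. Hence, starting with a coloring of one outermost region inside the ball and propagating across double-point arcs, there is at most one consistent extension; existence reduces to checking compatibility at triple points and branch points.

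I would then partition the Roseman moves into three groups. First, the moves that only create, delete, or slide double-point curves without producing triple points or branch points (the $4$D analogs of the first two Reidemeister moves): these are handled directly by the extension principle, since axiom (1) gives both existence and uniqueness of the extended coloring on each side of the move. Second, the move involving a triple point (the $4$D analog of the Reidemeister III move): locally, three sheets meet and the complement of the configuration in a small ball has eight regions arranged like the octants of $\R^3$. The three double-point equations on the three double-point arcs emanating from the triple point recombine into nested triple products, and the compatibility of the two ways of computing the color of the ``opposite'' octant is precisely
\[
\bigl[b,[a,b,c],[a,b,d]\bigr] \;=\; \bigl[c,[a,b,c],[a,c,d]\bigr] \;=\; \bigl[d,[a,b,d],[a,c,d]\bigr],
\]
which is axiom (2). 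This identifies the coloring bijection on the two sides of the triple-point move. Third, the moves involving branch points: at a branch point two of the four surrounding regions collapse to one, so the coloring rule degenerates to an equation like $[a,b,b]=a$, which follows from axiom (1) by uniqueness.

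The main obstacle is the triple-point move. One must label the eight octants carefully, match them with the letters $a,b,c,d$ of axiom (2) in a way consistent with the chosen normal orientations of the three sheets, and verify that the two systems of three double-point equations (one system for each side of the move) have identical solution sets. Getting the orientation conventions right --- so that the surface analog of Figure~\ref{fig:R3} truly reduces to the already-established axiom (2) --- is where the real bookkeeping lies; once that is done, the bijection for every remaining Roseman move is essentially forced by axiom (1) applied one double-point arc at a time.
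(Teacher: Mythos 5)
First, a point of comparison: the paper does not prove this statement at all --- it is imported verbatim as Theorem 3.18 of \cite{Niebrzydowski_knotted_surfaces} --- so your outline is necessarily a reconstruction of Niebrzydowski's argument rather than an alternative to anything in this paper. Your overall strategy (reduce to a local check for each Roseman move, propagate colors across sheets using axiom (1), and identify axiom (2) as the compatibility condition among the eight octants at a triple point) is the right one, and the triple-point octant analysis is correctly matched to the equation of Figure~\ref{fig:R3}.

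There are, however, two genuine problems. The branch-point step fails as written: the degenerate relation at a branch point is not the identity $[a,b,b]=a$, and no such identity follows from axiom (1) --- in the Dehn tribracket of $\Z/3$ one has $[a,b,b]=2b-a\neq a$ unless $a=b$. What actually happens, exactly as in the Reidemeister~I case underlying Lemma~\ref{lem:col_IS_knot_diagram} (tribrackets have no idempotence-type axiom there either), is that the two regions identified by travelling around the end of the double-point arc form a \emph{diagonal} pair, so the relation takes the form $[x,y,y]=z$ in which one of $x,z$ is the color of a newly created region; axiom (1) then determines that new color uniquely and places no constraint on the pre-existing ones. If the relation really were $[a,b,b]=a$ among old colors, the move would destroy colorings and the theorem would be false. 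Second, your three-way grouping does not account for the seven Roseman moves: local consistency at a single triple point is part of the \emph{definition} of a tribracket coloring of $\Bcal$, not a move, whereas the moves actually to be checked include the creation/annihilation of a pair of triple points, pushing a branch point through a sheet, and the tetrahedral (quadruple-point) move. For the last of these one must verify that the two systems of triple-point equations on either side of the move cut out the same set of colorings of the outer regions --- the analogue of the hardest movie moves --- and your sketch does not address it.
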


\begin{figure}[h]
\centering
\includegraphics[width=.26\textwidth]{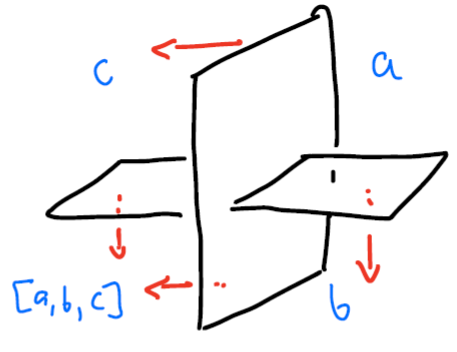}
\caption{Region colorings near double points.}
\label{fig:broken_rules}
\end{figure}

\begin{lemma}\label{prop:colorings_unknotted_surface}
Let $F$ be an oriented surface-link in $S^4$. If $F$ is unknotted, then 
\[ 
\col(F)=|A|^{|F|+1}, 
\]
where $|F|$ denotes the number of connected components of $F$. 
\end{lemma}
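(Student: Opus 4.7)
The plan is to mimic the proof of Lemma~\ref{lem:tribrackets_unlink} in one dimension up: produce a broken surface diagram for $F$ that is \emph{crossingless} (no double points, triple points, or branch points), count its complementary regions in $S^3$, observe that in the absence of double points the tribracket equations are vacuous, and then invoke the Roseman-move invariance of Theorem~\ref{lem:broken_invariant}.

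First, I would unpack the hypothesis. An unknotted orientable surface-link is isotopic in $S^4$ to a split disjoint union of standardly embedded closed orientable surfaces, each of which bounds a handlebody on one side and lies in a standard $S^3\subset S^4$. This means we can choose the projection $S^4\setminus\{pt\}\cong\R^4\to\R^3$ so that $F$ already sits inside the hyperplane $\R^3\times\{0\}$ as an embedded (not merely immersed) surface. The resulting broken surface diagram $\Bcal$ has empty double-point set, empty triple-point set, and no branch points.

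Second, I would count the complementary regions of such a $\Bcal$ in $S^3$. Any connected closed orientable surface embedded in $S^3$ is separating, so by a straightforward induction on the number of components, a disjoint union of $|F|$ closed orientable surfaces in $S^3$ cuts it into exactly $|F|+1$ open regions; this is essentially Alexander duality, but at this level one only needs the separating property and additivity of $\pi_0$. Thus $\Bcal$ has $|F|+1$ complementary regions in the ambient $S^3$.

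Third, I would count colorings of $\Bcal$. Since there are no double points, the coloring condition in Figure~\ref{fig:broken_rules} imposes no constraints, so a region coloring is simply an arbitrary function from the set of complementary regions to $X$. This gives exactly $|X|^{|F|+1}$ colorings (here $|A|$ in the statement should read $|X|$, in line with the running convention that $X$ is the tribracket). By Theorem~\ref{lem:broken_invariant} this count equals $\col(F)$, which is what we wanted.

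The only step that is not completely routine is the first one, namely justifying that an unknotted orientable surface-link admits a \emph{flat} broken surface diagram. If one prefers to avoid invoking a structural description of unknottedness, an alternative is to take whatever definition of ``unknotted'' the paper uses (typically: bounds a disjoint union of handlebodies in $S^4$) and isotope the surface into a slice $\R^3\times\{0\}$ directly using that handlebody structure; either way, once the embedded diagram is in hand, the counting argument is immediate.
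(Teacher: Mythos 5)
Your proof is correct and follows essentially the same route as the paper's: exhibit a crossingless (embedded) broken surface diagram for the unknotted surface, note that a disjoint union of $|F|$ closed orientable surfaces in $\R^3$ separates it into $|F|+1$ regions, and conclude that all $|X|^{|F|+1}$ region assignments are valid colorings. Your observation that $|A|$ in the statement should read $|X|$ is also a fair catch.
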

\begin{proof}
Orientable unknotted surfaces admit broken diagrams with no double or triple points. The result follows as any embedding of an $m$-component closed surface in $\R^3$ separates the space into $m+1$ distinct regions. 
\end{proof}

\section{Movies}\label{sec:movies}
Recall that a link diagram is a four-valent planar graph with crossing information at each vertex. A \emph{movie} is a one-parameter family of classical link diagrams $\{D_t:t\in [0,1]\}$ such that nearby diagrams differ by topological equivalences of the underlying 4-valent graphs or \emph{elementary string interactions}: a Reidemeister move, a birth, a death, or a saddle. See Figures~\ref{fig:Reid_moves} and~\ref{fig:string_interactions} for local descriptions of these interactions. Each link diagram $D_t$ will be referred to as a \emph{frame}. An \emph{orientation} of a movie $\Mcal$ is a consistent choice of orientations of all frames in $\Mcal$. Oriented movies induce embeddings of oriented surface-links into $\R^4$ as each frame represents an oriented link $L_t$ in $\R^3\times \{t\}\subset \R^3\times \R$. For more information on movies, see~\cite{book_surfaces_in_4space}. In 1993, Carter and Saito produced a set of fifteen local modifications of a movie that preserved the isotopy class of the surface-link. They showed that these \emph{movie moves} can be used to connect two movies representing isotopic surfaces \cite{movie_moves}; see Figure~\ref{fig:movie_moves}.

\begin{figure}[h]
\centering
\includegraphics[width=.4\textwidth]{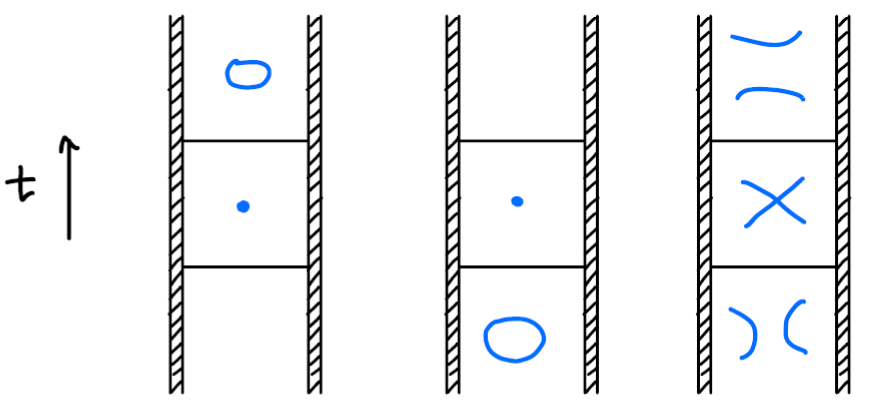}
\caption{Elementary string interactions.}
\label{fig:string_interactions}
\end{figure}

A tribracket coloring of an oriented movie $\Mcal=\{D_t\}_t$, is a consistent choice of tribracket colorings for each diagram $D_t$. Consistent refers to the condition that regions on the same side of a strand in a diagram $D_t$ have the same label while increasing the variable $t$. For consistent colorings, the labels in the four regions near a saddle point are related as in Figure~\ref{fig:string_interactions}. The following result is a direct consequence of Lemma~\ref{lem:col_IS_knot_diagram}. 
\begin{lemma}\label{rem:movies_of_Reid}
    The number of tribracket colorings of a movie consisting of Reidemeister moves only is equal to the number of tribracket colorings of any intermediate link diagram. 
\end{lemma}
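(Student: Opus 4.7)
The plan is an induction on the number of elementary string interactions appearing in $\Mcal$; by hypothesis, each such interaction is a Reidemeister move. The base case, in which $\Mcal$ is a constant movie, is tautological: there is a single frame $D_{t_0}$, and the consistency condition imposes no constraint.

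For the inductive step, it suffices to analyze what happens across a single Reidemeister move occurring at time $t = t_\star$. Let $B$ be a small disk in the plane supporting the move, so that the diagrams $D_{t^-}$ and $D_{t^+}$ immediately before and after $t_\star$ coincide outside of $B$. The consistency requirement in the definition of a movie coloring forces $c_{t^-}$ and $c_{t^+}$ to agree on every region meeting the complement of $B$, i.e.\ on every region that persists through the move. The key claim I would verify is that this matching rule is precisely the bijection of Lemma~\ref{lem:col_IS_knot_diagram}: for each of Reidemeister I, II, and III, the colors assigned to the persistent regions along $\partial B$, together with tribracket axiom (1) (any three regions around a crossing uniquely determine the fourth), extend in exactly one way to a tribracket coloring of the part of $D_{t^-}$ inside $B$, and likewise for $D_{t^+}$. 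One checks this move-by-move by inspecting Figures~\ref{fig:R3} and~\ref{fig:tribracket_equations}.

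Given this, fix any frame $D_{t_0}$ and define a restriction map $\Phi\colon \col(\Mcal) \to \col(D_{t_0})$ by $\{c_t\}_t \mapsto c_{t_0}$. Composing the per-move bijections above, moving forwards and backwards in time away from $t_0$, any tribracket coloring of $D_{t_0}$ extends in a unique consistent way to a coloring of every other frame, showing that $\Phi$ is a bijection. In particular, when $X$ is finite, $\col(\Mcal) = \col(D_{t_0})$ as desired.

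The only point requiring genuine care is verifying that the bijections of Lemma~\ref{lem:col_IS_knot_diagram} are literally given by the identity on persistent regions together with the forced extensions inside $B$; this is really the main obstacle, but it is immediate from the proof of that lemma together with the determinism provided by tribracket axiom (1). Once this is in hand, the movie statement is essentially a matter of bookkeeping across finitely many Reidemeister moves.
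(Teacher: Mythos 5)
Your argument is correct and is essentially the paper's own reasoning: the paper dispatches this lemma in one line by observing it is a direct consequence of Lemma~\ref{lem:col_IS_knot_diagram}, whose proof shows exactly what you isolate as the key claim, namely that the Reidemeister bijections fix the colors of the persistent regions and force the extension inside the disk. Your induction and restriction map $\Phi$ just spell out the bookkeeping the paper leaves implicit.
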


For a tribracket $X$ and an oriented movie $\Mcal$, $\col(\Mcal)$ is defined to be the number of tribracket colorings of $\Mcal$. The following proposition implies that $\col(\Mcal)$ is an invariant of surface-links.

\begin{proposition}\label{prop:invariance_movie_moves}
Let $\mathcal{M}_1$ and $\mathcal{M}_2$ be two movies of surface-links. If they represent isotopic oriented surface-links, then $\col(\mathcal{M}_1)=\col(\mathcal{M}_2)$.
\end{proposition}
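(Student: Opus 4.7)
The plan is to reduce the proposition to a finite check by invoking the Carter--Saito theorem: any two movies representing isotopic oriented surface-links are related by a finite sequence of the fifteen movie moves. So it suffices to prove that $\col(\Mcal) = \col(\Mcal')$ whenever $\Mcal'$ is obtained from $\Mcal$ by a single movie move. Since a movie move modifies $\Mcal$ only inside a compact space-time window, and since the consistency condition means that regions on the same side of a strand carry the same label as $t$ varies, any coloring of $\Mcal$ restricts to a coloring of the frames just before and just after the window, together with a coloring of the local patch. Outside the window the two sides of the move are identical, so the task reduces to setting up, for each movie move, a bijection between the local coloring data compatible with a fixed set of boundary labels on the window.

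For each of the fifteen moves I would separate the moves into two types. The first type consists of moves where both sides are built from Reidemeister moves only (many of the Carter--Saito moves are of this form). For these, Lemma~\ref{rem:movies_of_Reid} immediately gives that both sides have colorings in bijection with tribracket colorings of any fixed intermediate frame, yielding the desired bijection. The second type consists of moves containing at least one birth, death, or saddle. For these, I would trace through the effect of each elementary interaction on the colorings: a birth adds a new region whose color can be chosen freely, a death forces the soon-to-be-destroyed region to share its color with the region absorbing it (by consistency across the nearby frames), and a saddle imposes the four-region relation shown in Figure~\ref{fig:string_interactions}. Checking that the same configuration of these constraints arises on both sides of each remaining movie move is a direct, local computation.

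A useful reformulation is that, by Lemma~\ref{lem:region_cols_braids}-style propagation together with Lemma~\ref{lem:col_IS_knot_diagram}, a tribracket coloring of a movie is determined by coloring any one frame and recording the free choices made at each birth and the outputs of each saddle (the latter being governed by the tribracket axioms via the bounding Reidemeister moves). Hence the bijection for each movie move amounts to matching, on the two sides of the move, the set of births plus the constraints imposed at each saddle. In almost every case this is transparent; for the few cases where a saddle is swapped past a Reidemeister move, the verification reuses exactly the same tribracket equations (axioms (1) and (2)) that were used to prove Lemma~\ref{lem:col_IS_knot_diagram}.

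The main obstacle is simply the bookkeeping across the fifteen Carter--Saito moves, particularly those in which a saddle is paired with a nontrivial Reidemeister move (for instance the moves where a saddle slides past a Type III move or is absorbed into a Reidemeister II). There one must carefully match the colors of the four regions adjacent to the saddle on one side with the colors produced after the Reidemeister moves on the other side, and confirm that the tribracket equations force the matches to be compatible. I expect no move to introduce an axiom beyond those already captured by Lemma~\ref{lem:col_IS_knot_diagram}, so the proof should consist of a uniform case-by-case verification rather than any new conceptual ingredient.
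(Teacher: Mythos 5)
Your proposal follows essentially the same route as the paper: invoke the Carter--Saito theorem to reduce to the fifteen movie moves, dispatch the moves consisting only of Reidemeister moves via Lemma~\ref{rem:movies_of_Reid}, and handle the remaining moves (those involving a birth, death, or saddle) by a local analysis of how each elementary interaction constrains the region colors. The paper carries out exactly this case analysis for moves 8, 12, 13, 14, and 15, observing in each case that the coloring on both sides is determined by a common frame, which is the same bookkeeping you describe.
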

\begin{proof}
It is sufficient to verify that the number of tribracket colorings of a movie remains unchanged under Carter-Saito movie moves. We will refer to the moves in Figure~\ref{fig:movie_moves}. 
Lemma~\ref{rem:movies_of_Reid} implies that a pair of movies of Reidemeister moves with some frames in common must have the same number of tribracket colorings. Thus, the following movie moves must preserve the number of tribracket colorings: 1, 2, 3, 4, 5, 6, 7,  9, 10, and 11. 
In what follows, we treat the remaining moves separately: 8, 12, 13, 14, and 15. 
\begin{figure}[h]
\centering
\includegraphics[width=.7\textwidth]{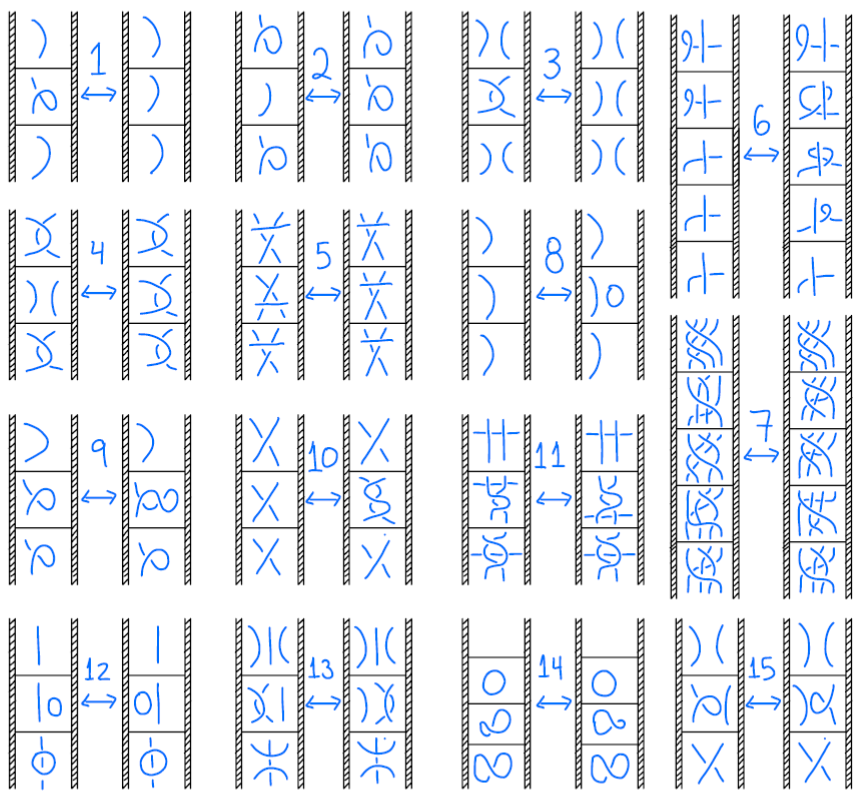}
\caption{The fifteen Carter-Saito movie moves.}
\label{fig:movie_moves}
\end{figure}

Move 8: The movies involved in this move correspond to one saddle followed by one birth. Locally, the coloring in the left movie is determined by the coloring in the first frame. The same holds for the second movie, as the saddle between the middle and last frames forces the color of the small disk to be dependent on the colors of the last frame, which, in turn, agrees with the colors of the first frame. Thus, move 8 preserves the tribracket coloring. 

Moves 12, 13, and 15: The movies involved in these moves correspond to one saddle followed by one Reidemeister move. For either movie, the fact that the middle frame connects with the top frame by a saddle forces the coloring in the middle frame. By Lemma~\ref{rem:movies_of_Reid}, the coloring between the middle and bottom frame is determined as they differ by Reidemeister moves. Thus, the coloring of either movie is determined by its top frame. Hence, this move does not change the number of colorings. 

Move 14: The movies involved in this move are composed of one birth and one Reidemeister move. By Lemma~\ref{rem:movies_of_Reid}, the colorings in either movie are determined by the colorings in the second to top frames of each movie, which are identical. Hence, the number of tribracket colorings is invariant under move 14. 
\end{proof}

\begin{figure}[h]
\centering
\includegraphics[width=1\textwidth]{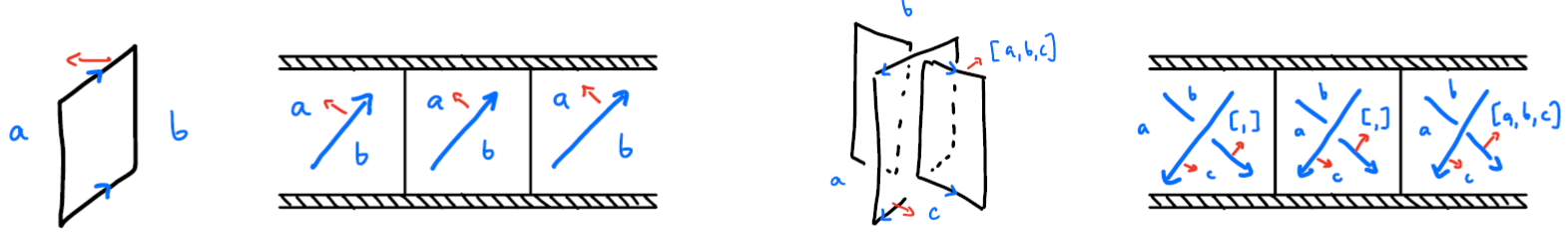}
\caption{How a movie describes a broken surface diagram}
\label{fig:movie_to_broken}
\end{figure}
Let $\Mcal$ be a movie representing an oriented surface-link $F\subset S^4$. By stacking the 2-dimensional frames of $\Mcal$ we obtain a broken surface diagram for $F$ denoted by $\Bcal_\Mcal$. When the movie $\Mcal$ is oriented, this induces an orientation on the broken diagram as in Figure~\ref{fig:movie_to_broken}. See~\cite{movie_moves} for details. The following proposition shows that the number of tribracket colorings of a movie equals $\col(F)$.

\begin{lemma}\label{lem:movies_give_broken}
Let $X$ be a tribracket. There is a bijection between the number of region $X$-colorings of $\Mcal$ and that of $\Bcal_\Mcal$. In particular, if $X$ is finite, 
\[ \col(\Mcal)=\col(\Bcal_\Mcal). \]
\end{lemma}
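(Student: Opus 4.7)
The plan is to establish the bijection by unpacking what the 3-dimensional complementary regions of $\Bcal_{\Mcal}$ actually are in terms of the movie. Since $\Bcal_{\Mcal}$ is obtained by literally stacking the frames $D_t$ along a vertical time direction, a point in a 3D complementary region of $\Bcal_{\Mcal}$ is a point in a 2D complementary region of some frame $D_t$. The key observation is that two points in different frames lie in the same 3D complementary region of $\Bcal_{\Mcal}$ if and only if one can slide one to the other through frames while staying in 2D complementary regions. Thus, the 3D regions of $\Bcal_{\Mcal}$ are exactly the equivalence classes of 2D regions of $\{D_t\}$ under the equivalence relation generated by: two 2D regions of frames are equivalent if they agree outside a small neighborhood of a Reidemeister move, birth, death, or lie on the corresponding sides at a saddle.

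I would then define the map one way: given a tribracket coloring $\varphi$ of $\Bcal_{\Mcal}$, assign to each 2D region of each frame the color $\varphi$ gives the 3D region containing it. By construction this assignment is consistent across frames. For the converse map, given a consistent tribracket coloring of $\Mcal$ in the sense defined before Lemma~\ref{rem:movies_of_Reid}, the consistency condition across Reidemeister moves and across saddles/births/deaths is precisely what is needed so that any two 2D regions belonging to the same 3D region of $\Bcal_{\Mcal}$ are assigned the same color; this defines a coloring of the 3D regions.

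The next step is to verify that the tribracket equation at a double point of $\Bcal_{\Mcal}$ is equivalent to the tribracket equation at the corresponding crossing in a frame. This is immediate: a transverse slice of $\Bcal_{\Mcal}$ at a point of a double-point arc is exactly a crossing of some $D_t$, the over/under information is preserved by stacking, the four 3D regions meeting the double-point arc correspond to the four 2D regions meeting the crossing, and Figure~\ref{fig:broken_rules} then reduces to Figure~\ref{fig:tribracket_equations}. Since every double-point arc of $\Bcal_{\Mcal}$ arises this way, the two sets of equations match up exactly. One also has to note that triple points and branch points of $\Bcal_{\Mcal}$ impose no additional constraints in Niebrzydowski's setup beyond the double-point equation (triple points give three instances of the double-point equation, branch points correspond to the forgotten endpoints of double-point arcs), so they do not obstruct the bijection.

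The main obstacle I anticipate is being careful at saddles, where the 2D region structure changes discontinuously: two 2D regions merge (or one splits into two) without introducing any double points in $\Bcal_{\Mcal}$. Here I need to check that the consistency rule shown in Figure~\ref{fig:string_interactions} is exactly the identification that makes the merged/split 2D regions live in a single 3D region of $\Bcal_{\Mcal}$, and symmetrically that births and deaths correspond to the creation or deletion of a bounded 3D region whose color can be set freely and is consistent with the surrounding region through the smooth disk. Once these local checks are in place, composing the two maps in either order yields the identity, so the correspondence is a bijection; the enumerative consequence $\col(\Mcal)=\col(\Bcal_{\Mcal})$ follows when $X$ is finite.
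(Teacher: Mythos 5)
Your proposal is correct and follows essentially the same route as the paper's (much terser) proof: identify the 3-dimensional regions of $\Bcal_\Mcal$ with compatible families of 2-dimensional regions across frames, match the double-point equations of Figure~\ref{fig:broken_rules} with the crossing equations of Figure~\ref{fig:tribracket_equations} slice by slice, and invert by slicing. Your additional local checks at saddles, births/deaths, and triple/branch points are exactly the details the paper leaves implicit, so there is nothing to change.
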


\begin{proof}
    Let $\Mcal$ be a movie and let $\Bcal_\Mcal$ be the associated broken diagram. As shown in Figure~\ref{fig:movie_to_broken}, the tribracket coloring rule for each frame of $\Mcal$ induces a region coloring of $\Bcal_\Mcal$ that satisfies the tribracket equations for the crossings of $\Bcal_\Mcal$. This association is bijective since different region colorings of $\Mcal$ produce distinct region colorings of $\Bcal_\Mcal$, and region colorings of $\Bcal_\Mcal$ can be ``sliced'' to produce region colorings of $\Mcal$. 
\end{proof}

\begin{lemma}\label{prop:movie_cross}
    Let $\Mcal$ be a movie representing a surface-link $F\subset S^4$ and let $L\subset S^3$ be a link that is a cross-section of $\Mcal$. If $L$ is above all the births in $\Mcal$, then $$\col(\Mcal)\leq \col(L).$$ 
\end{lemma}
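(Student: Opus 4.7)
The plan is to construct an injection $r\colon \col(\Mcal)\hookrightarrow\col(L)$ by restricting a tribracket coloring of the movie to the frame $L=L_{t_0}$; since a consistent coloring of $\Mcal$ colors each frame consistently, this restriction is well-defined. It then suffices to show $r$ is injective, i.e.\ that a coloring of $\Mcal$ is uniquely determined by its values on $L$. I would prove this by propagating the coloring from $L$ upward to the top and downward to the bottom of $\Mcal$, verifying uniqueness at every elementary string interaction encountered.

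Above $L$, by hypothesis the only interactions are Reidemeister moves, saddles, and deaths. Reidemeister moves act bijectively on colorings by Lemma~\ref{rem:movies_of_Reid}. At a saddle, the four surrounding regions satisfy a tribracket relation read off from Figure~\ref{fig:string_interactions}, so axiom (1) in the definition of a tribracket determines the single new region uniquely from the three preexisting colors. A death simply removes the interior of a bounding circle, imposing a consistency constraint (the inner and outer colors must agree) but creating no new region and offering no free choice. Going downward from $L$, Reidemeister moves and saddles are handled analogously; a birth read downward deletes the interior region of the newborn circle, so no freedom is introduced; and a death read downward creates an interior region whose color is forced to equal that of the outer region, in order for the death, when read upward, to be consistent. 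In both directions the coloring is thus fully determined by its restriction to $L$, giving the injectivity of $r$.

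The main point requiring care — and the step where I would expect to spend the most time — is the saddle case: one must check from the convention in Figure~\ref{fig:string_interactions} that the four colors at a saddle point satisfy a tribracket equation of the form $[a,b,c]=d$ (or a cyclic variant), so that tribracket axiom (1) applies uniformly regardless of which three of the four colors one already knows. The remaining elementary interactions (births, deaths, and Reidemeister moves) are either covered by Lemma~\ref{rem:movies_of_Reid} or are essentially book-keeping about region creation and merging. Once injectivity of $r$ is established, the inequality $\col(\Mcal)\le\col(L)$ is immediate.
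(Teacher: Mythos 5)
Your overall strategy coincides with the paper's: restrict a coloring of $\Mcal$ to the frame containing $L$ and show this restriction is injective by propagating the coloring uniquely upward to the end of the movie and downward to its beginning. The upward half, and your treatment of Reidemeister moves, saddles, and births read downward, are all correct. (For the saddle, the relation in Figure~\ref{fig:string_interactions} amounts to the coincidence of two pairs of opposite local regions rather than an application of tribracket axiom~(1), but either way the one new region is determined and no freedom is introduced.)

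The genuine gap is your claim that at a death the interior color is ``forced to equal that of the outer region.'' It is not. A death is a maximum point of the surface, hence a regular point of the associated broken surface diagram, and the interior of the dying circle lies in the $3$-dimensional complementary region on the concave side of the cap, which in general is a \emph{different} region from the exterior; this is exactly why the unknotted sphere (one birth, one death) has $|X|^2$ colorings rather than $|X|$. Consequently, if a death occurs strictly below the level of $L$, propagating downward past it creates a region whose color is genuinely free, your map $r$ fails to be injective, and in fact the inequality itself fails there: a split unknotted sphere that is born and dies entirely below the level of $L$ contributes an extra factor of $|X|$ to $\col(\Mcal)$ that $\col(L)$ does not see. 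The paper's proof tacitly assumes that the portion of the movie below $L$ contains only births, saddles, and Reidemeister moves (which holds for the normalized movies to which the lemma is applied in Proposition~\ref{prop:movie_inequality_saddle}); you need that same hypothesis rather than an invented constraint at deaths. The parenthetical ``inner and outer colors must agree'' in your upward death case is wrong for the same reason, though there it is harmless since the region simply disappears.
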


\begin{proof}
    The second part of $\Mcal$, from the frame containing $L$ to the movie's end, comprises a sequence of Reidemeister moves, saddles, and deaths. Notice that a region coloring of a link can be uniquely extended to a region coloring of the link after a Reidemeister move, a saddle, or a death. Thus, a region coloring of $L$ determines the region coloring of all subsequent frames. By running the movie backward, we conclude that the coloring of $L$ determines the region colorings of all the frames before it. Hence, $\col(\Mcal)\leq \col(L)$. Notice that this inequality may be strict as some tribracket colorings of $L$ may extend to region colorings that violate the tribracket equations in some frames. 
\end{proof}

We count the number of births, saddles, and deaths in a movie $\Mcal$ and denote them by $m$, $s$, and $M$, respectively. These numbers are related through the Euler characteristic of $F$ through the equation $\chi(F)=m-s+M$. The quantities $m$ and $M$ satisfy that 
\[ 
\text{rank}\left(\pi_1(S^4-F)\right) \leq \min(m,M).
\]
We provide upper bounds to such quantities in terms of region colorings. Denote by $m(F)$, similarly $s(F)$ and $M(F)$, the smallest number of births in a movie among all movies representing $F$. 

\begin{proposition}\label{prop:movie_inequality_saddle}
    Let $F\subset S^4$ be an oriented surface. The following inequalities hold for a finite tribracket $X$. 
    \begin{enumerate}[label=(\roman*)]
    \item $-1+\log_{|X|}\left(\col(F)\right)\leq m(F)$, and 
    \item $ -2-\chi(F) + 2\log_{|X|}\left(\col(F)\right)\leq s(F)$.
    \end{enumerate}
\end{proposition}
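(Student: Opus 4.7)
The plan is to prove a single estimate, $\col(F) \leq |X|^{k+1}$, where $k$ is either the number of births or the number of deaths in any movie of $F$. Part (i) follows directly from the births version, and part (ii) follows by combining both bounds with the Euler-characteristic identity $\chi(F) = m - s + M$.

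First I would establish $\col(F) \leq |X|^{m+1}$ for an arbitrary movie $\Mcal$ of $F$ with $m$ births. Using movie moves, rearrange $\Mcal$ into a ``normal form'' in which all $m$ births occur before any saddle or death; this is the Morse-theoretic reordering of critical points of the height function by increasing index, and it preserves the total counts $m$, $s$, $M$. The cross-section $L$ taken immediately after the final birth is then built from the empty link by $m$ births and Reidemeister moves, so $L$ is isotopic to an $m$-component unlink. By Lemma~\ref{lem:tribrackets_unlink}, $\col(L) = |X|^{m+1}$, and since $L$ lies above all births, Lemma~\ref{prop:movie_cross} gives
\[
\col(F) = \col(\Mcal) \leq \col(L) = |X|^{m+1}.
\]
Taking $\log_{|X|}$ and minimizing over movies of $F$ proves part (i).

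For (ii), take a second cross-section $L'$ in the same normal-form movie, positioned between the last saddle and the first death. By a symmetric argument, running the movie backward from the empty final link through the $M$ deaths (which become $M$ births in reverse), $L'$ is isotopic to an $M$-component unlink, and it still lies above all births. Lemma~\ref{prop:movie_cross} therefore gives $\col(F) \leq |X|^{M+1}$ as well. Combining the two estimates for this single movie with counts $m, s, M$, and using $\chi(F) = m - s + M$, i.e.\ $m + M = \chi(F) + s$, we obtain
\[
2 \log_{|X|} \col(F) \leq m + M + 2 = \chi(F) + s + 2.
\]
This rearranges to $-2 - \chi(F) + 2 \log_{|X|} \col(F) \leq s$, valid for every movie of $F$; since the right-hand side depends on the movie only through $s$, minimizing yields the desired inequality involving $s(F)$.

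The main obstacle is the normal-form step: one must verify that every movie can be reorganized via Carter--Saito moves so that births precede saddles precede deaths, without altering the counts $m, s, M$. This is a standard consequence of the Morse theory of surfaces (rearrangement of critical points by index) together with the movie-move correspondence of~\cite{book_surfaces_in_4space, movie_moves}. Once this rearrangement is in hand, both inequalities reduce to direct applications of Lemmas~\ref{lem:tribrackets_unlink} and~\ref{prop:movie_cross}.
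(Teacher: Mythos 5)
Your proof is correct and follows essentially the same route as the paper: reorder the movie so that births precede saddles precede deaths, observe that the cross-section above the births is an $m$-component unlink, and combine Lemmas~\ref{prop:movie_cross} and~\ref{lem:tribrackets_unlink} to get $\col(F)\leq |X|^{m+1}$. The only (minor) difference is in (ii), where you bound $m+M$ within a single movie by taking a second cross-section before the deaths, whereas the paper instead notes that playing the movie backward interchanges births and deaths so that $m(F)=M(F)$; both arguments reduce to the same Euler-characteristic identity, and your per-movie phrasing is, if anything, slightly more careful about which movie realizes $s(F)$.
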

\begin{proof}
    Let $\Mcal$ be a movie representing $F$. It is a known fact that $\Mcal$ can be modified without changing the surface $F$ nor the quantities $m$, $s$, nor $M$ so that all the births occur before all the saddles, which occur before all the deaths. In particular, the $m$-component unlink is a cross-section of $\Mcal$ above all births. Lemmas \ref{lem:movies_give_broken} and \ref{prop:movie_cross} imply that $\col(F)$ is bounded above by the number of tribracket colorings of the $m$-component unlink which, by Lemma~\ref{lem:tribrackets_unlink}, is equal to $|X|^{m+1}$. Thus, $\col(\Mcal)\leq |X|^{m+1}$ and so (i) holds. 
    To show (ii), notice that every movie played backward interchanges the births and deaths without changing $F$. Thus, $m(F)=M(F)$ and so 
    \[-2+2\log_{|X|}\left(\col(F)\right)\leq m(F)+M(F)=\chi(F)+s.\] 
\end{proof}

\subsection{Marked vertex diagrams}
The relevant information about movies of surface-links can be compressed into a graph that we now discuss. A \emph{marked vertex diagram} is a planar graph $\MV$ with two types of four-valent vertices called \emph{crossings} and \emph{marked vertices}. Crossings are the same as in a classical link diagram, and marked vertices are decorated with a small rectangle as in Figure~\ref{fig:smoothings}. We can remove a marked vertex of $\MV$ in one of two particular ways called \emph{A-smoothing} and \emph{B-smoothing}. Denote by $\MV_A$ and $\MV_B$ the link diagrams obtained by replacing all of the marked vertices with all-A-smoothing and all-B-smoothing, respectively. A marked vertex diagram $\MV$ is called \emph{simple} if $\MV_A$ and $\MV_B$ represent unlinks in 3-space. Simple marked vertex diagrams were studied by Lomonaco~\cite{lom81} and Yoshikawa~\cite{Yoshi94_enumeration}; for a complete survey of these diagrams see~\cite{book_surfaces_in_4space}.
\begin{figure}[h]
\centering
\includegraphics[width=.4\textwidth]{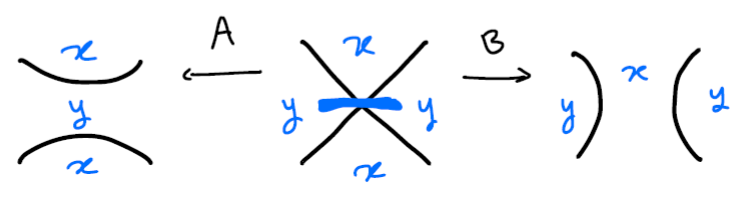}
\caption{Two distinct smoothings of a marked vertex. Notice that they correspond with the top and bottom frames of a saddle in Figure~\ref{fig:string_interactions}.}
\label{fig:smoothings}
\end{figure}

An \emph{orientation} of a marked vertex diagram $\MV$ is a choice of orientations for the edges of $\MV$ so that the oriented graphs obtained by smoothing all the marked vertices (in any way) are oriented as classical link diagrams. Notice that, near a marked vertex, the orientation of the edges must be the same for the NE and SW edges, which have the opposite orientations as the NW and SE edges. For an oriented marked vertex diagram $\MV$, a region coloring of $\MV$ is called a \emph{tribracket coloring} if it induces tribracket colorings for $\MV_A$ and $\MV_B$. Notice that the labels of the regions near a marked vertex are related as in Figure~\ref{fig:smoothings}. For a finite tribracket $X$ and an oriented marked vertex diagram, $\col(\MV)$ denotes the number of tribracket colorings. 

We now explain how marked vertex diagrams describe surface-links in 4-space. Let $\MV$ be a simple marked vertex diagram. Using the movie of a saddle from Figure~\ref{fig:string_interactions}, one can create a movie that starts in $\MV_A$ and ends in $\MV_B$. As both $\MV_A$ and $\MV_B$ are diagrams for the unlink, there are sequences of Reidemeister moves taking them to crossingless diagrams, which can then be completed with movies of births and deaths to a movie for a closed surface in 4-space. Notice that there are many sequences of Reidemeister moves that turn an unlink diagram into a crossingless diagram. That said, it is a fact that the isotopy class of surface-link built in this way is the same regardless of the sequence of Reidemeister moves we choose; see~\cite{book_surfaces_in_4space} for more details, or one can prove it using the Main Theorem of~\cite{livingston_unlink}. 

\begin{proposition}
    Let $\MV_1$ and $\MV_2$ be two oriented, simple marked vertex diagrams. If $\MV_1$ and $\MV_2$ represent the same oriented surface-link $F$, then \[\col(\MV_1)=\col(\MV_2)=\col(F).\]
\end{proposition}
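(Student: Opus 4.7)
The plan is to associate to each simple marked vertex diagram $\MV$ a specific movie $\Mcal_\MV$ realizing $F$, establish a bijection between tribracket colorings of $\MV$ and of $\Mcal_\MV$, and then deduce the statement from Proposition~\ref{prop:invariance_movie_moves} and Lemma~\ref{lem:movies_give_broken}. Following the discussion preceding the proposition, I would build $\Mcal_\MV$ from bottom to top as follows: start from the empty frame; perform births producing a crossingless diagram of an unlink; apply Reidemeister moves taking that unlink to $\MV_A$; perform one saddle at each marked vertex of $\MV$, arriving at $\MV_B$; apply Reidemeister moves taking $\MV_B$ to another crossingless unlink; and finish with deaths to reach the empty frame. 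The orientation of $\MV$ is compatible with both smoothings, so it orients every frame of $\Mcal_\MV$ coherently.

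Next, I would build the bijection between tribracket colorings of $\Mcal_\MV$ and of $\MV$. In one direction, a tribracket coloring of $\Mcal_\MV$ restricts to tribracket colorings of the frames $\MV_A$ and $\MV_B$, and the saddle coloring rule of Figure~\ref{fig:string_interactions} applied at each middle saddle is exactly the local coloring rule at a marked vertex of $\MV$ depicted in Figure~\ref{fig:smoothings}; hence the restriction is a tribracket coloring of $\MV$. In the opposite direction, a tribracket coloring of $\MV$ provides by definition compatible tribracket colorings of $\MV_A$ and $\MV_B$, which I would propagate outward toward the two ends of $\Mcal_\MV$. Moving from $\MV_A$ backward in time through the Reidemeister portion yields a unique extension by Lemma~\ref{rem:movies_of_Reid}, and then through the births, uniqueness follows because reading a birth in reverse is the same as reading a death forward in time, a case covered by Lemma~\ref{prop:movie_cross}. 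Symmetrically, the coloring of $\MV_B$ extends uniquely forward through the subsequent Reidemeister moves and deaths. This gives $\col(\MV) = \col(\Mcal_\MV)$.

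Once this identification is in place, Lemma~\ref{lem:movies_give_broken} provides $\col(\Mcal_\MV) = \col(\Bcal_{\Mcal_\MV}) = \col(F)$, and Proposition~\ref{prop:invariance_movie_moves} guarantees that this count does not depend on the particular sequences of Reidemeister moves, births, and deaths chosen to build $\Mcal_\MV$. Combining these yields $\col(\MV_1) = \col(F) = \col(\MV_2)$. The hard part will be handling the second direction of the bijection cleanly: births read forward in time do not have unique coloring extensions, so one has to be careful to propagate the given $\MV_A$- and $\MV_B$-colorings outward toward the empty frames at the two ends, where, in that time direction, only Reidemeister moves and deaths occur, the two interactions for which the lemmas above guarantee unique extensions.
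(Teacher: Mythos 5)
Your proposal is correct and follows essentially the same route as the paper: build the movie from $\MV$ as births--Reidemeister--saddles--Reidemeister--deaths, observe that a tribracket coloring of the movie is determined by (and determines) compatible colorings of $\MV_A$ and $\MV_B$, i.e.\ a coloring of $\MV$, and then conclude via Proposition~\ref{prop:invariance_movie_moves} and Lemma~\ref{lem:movies_give_broken}. Your extra care about propagating outward toward the empty frames, so that births are only ever traversed in the direction where the extension is unique, is a detail the paper leaves implicit but does not change the argument.
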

\begin{proof}
    Let $\Mcal$ be a movie built from a marked vertex diagram $\MV$ as in the previous paragraph. This movie is composed of consecutive births, followed by a sequence of Reidemeister moves creating the diagram $\MV_A$, then a sequence of saddle movies creating the diagram $\MV_B$, a sequence of Reidemeister moves changing $\MV_B$ into a crossingless diagram, and ending the movie with finitely many deaths. From this description, using Lemma~\ref{rem:movies_of_Reid}, we note that a tribracket coloring for $\Mcal$ is determined by a tribracket coloring of $\MV_A$ and $\MV_B$, which, by definition, are determined by a tribracket coloring of the marked vertex diagram $\MV$. Thus, $\col(\MV)=\col(\Mcal)$. The result follows from Proposition~\ref{prop:invariance_movie_moves} and Lemma~\ref{lem:movies_give_broken}. 
\end{proof}

One can use the movies arising from a marked vertex diagram to relate $\col(F)$ to invariants of marked vertex diagrams $\MV$ such as $|\MV_A|$, $|\MV_B|$, and the number of marked vertices of $\MV$. The proofs of these statements are left as an exercise to the reader. Compare with Lemma~\ref{prop:movie_cross} and Proposition~\ref{prop:movie_inequality_saddle}.

\begin{lemma}\label{prop:marked_versted_ineq_cross}
    Let $\MV$ be a marked vertex diagram representing a surface-link $F\subset S^4$. If $L\subset S^3$ is a link diagram obtained from $\MV$ by replacing each marked vertex with some choice of smoothing, then $$\col(\MV)\leq \col(L).$$
    \end{lemma}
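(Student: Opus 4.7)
The plan is to reduce the statement to Lemma~\ref{prop:movie_cross} by exhibiting $L$ as a cross-section, above all births, of a suitably chosen movie representing $F$.

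First, I would express $L$ as the result of choosing, at each marked vertex of $\MV$, either the A-smoothing or the B-smoothing, and let $S$ denote the set of marked vertices at which $L$ uses the B-smoothing (so $L$ uses the A-smoothing at the remaining marked vertices). Then I would mimic the construction described in the paragraph preceding the previous proposition to build a movie $\Mcal$ from $\MV$: begin with births of unknots, use Reidemeister moves to produce $\MV_A$, perform one saddle at each marked vertex to arrive at $\MV_B$, and finish with Reidemeister moves to a crossingless diagram followed by deaths. The crucial modification is to order the saddles so that those at marked vertices in $S$ are performed first and those outside $S$ afterward. Because different marked vertices are supported in disjoint planar neighborhoods, the saddle moves at them commute, so this reordering does not change the surface-link $F$ represented by $\Mcal$. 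By construction, the frame appearing just after the saddles at $S$ have all been carried out and before any further saddle is precisely $L$; since all births of $\Mcal$ occur at the very start, $L$ lies above every birth.

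Next, I would apply Lemma~\ref{prop:movie_cross} to obtain $\col(\Mcal) \leq \col(L)$. By Lemma~\ref{lem:movies_give_broken} combined with Theorem~\ref{lem:broken_invariant} we have $\col(\Mcal) = \col(F)$, and the previous proposition gives $\col(\MV) = \col(F)$. Chaining these equalities with the inequality from Lemma~\ref{prop:movie_cross} yields the desired bound $\col(\MV) \leq \col(L)$.

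The main obstacle is the justification that one may reorder the saddle sequence in $\Mcal$ without altering the represented surface-link $F$. This should follow from the localized nature of saddle moves: saddles at distinct marked vertices are supported in disjoint planar neighborhoods, so the order in which they are carried out can be freely permuted, and each intermediate frame remains a well-defined link diagram. This is essentially the same independence-of-choice fact invoked in the paragraph preceding the previous proposition, so once that is granted, the rest of the argument is straightforward bookkeeping.
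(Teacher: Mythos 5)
Your proof is correct and follows exactly the route the paper indicates for this exercise: realize $L$ as a frame, above all births, of a movie built from $\MV$ by performing the disjointly supported saddles in a suitable order, and then invoke Lemma~\ref{prop:movie_cross} together with the invariance results to conclude $\col(\MV)=\col(F)=\col(\Mcal)\leq\col(L)$. The one point needing justification --- that reordering the saddles does not change the represented surface-link --- is handled adequately by the disjoint-support observation.
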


\begin{proposition}\label{prop:marked_vertex_inequality_saddle}
    Let $\MV$ be a marked vertex diagram representing a surface-link $F\subset S^4$. Suppose that $\MV$ has $v$ marked vertices. The following inequalities hold for a finite tribracket $X$. 
    \begin{enumerate}[label=(\roman*)]
    \item $-1+\log_{|X|}\left(\col(F)\right)\leq \min \left(|\MV_A|, |\MV_B|\right)$, and 
    \item $ -2-\chi(F) + 2\log_{|X|}\left(\col(F)\right)\leq v$.
    \end{enumerate}
\end{proposition}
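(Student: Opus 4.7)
The plan is to mirror the movie arguments of Lemma~\ref{prop:movie_cross} and Proposition~\ref{prop:movie_inequality_saddle}, but using the smoothings $\MV_A,\MV_B$ in place of cross-section links. For part (i), I would apply Lemma~\ref{prop:marked_versted_ineq_cross} with $L=\MV_A$, which gives $\col(F)=\col(\MV)\leq \col(\MV_A)$. Because $\MV$ is simple, $\MV_A$ is a diagram of an unlink with $|\MV_A|$ components, so Lemma~\ref{lem:tribrackets_unlink} yields $\col(\MV_A)=|X|^{|\MV_A|+1}$. Rearranging gives $-1+\log_{|X|}\left(\col(F)\right)\leq |\MV_A|$. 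Repeating the argument with $L=\MV_B$ and taking the minimum produces (i).

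For (ii), I would use the standard movie $\Mcal$ associated to $\MV$ (as constructed in the paragraph preceding the proposition): $|\MV_A|$ births, Reidemeister moves to reach $\MV_A$, exactly $v$ saddles (one per marked vertex) transforming $\MV_A$ into $\MV_B$, Reidemeister moves reducing $\MV_B$ to a crossingless diagram, and finally $|\MV_B|$ deaths. This is a movie of $F$ with $s=v$ saddles, so $s(F)\leq v$, and Proposition~\ref{prop:movie_inequality_saddle}(ii) immediately delivers (ii). Alternatively, using the Euler characteristic identity $\chi(F)=|\MV_A|-v+|\MV_B|$ that falls out of this movie, combined with part (i) applied twice, one obtains
\[
2\bigl(-1+\log_{|X|}\left(\col(F)\right)\bigr)\leq 2\min(|\MV_A|,|\MV_B|)\leq |\MV_A|+|\MV_B|=\chi(F)+v,
\]
which rearranges to (ii); this avoids invoking the movie-level inequality.

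The main obstacle, which is really only a bookkeeping point, is verifying that the natural movie built from $\MV$ truly uses $|\MV_A|$ births, $v$ saddles, and $|\MV_B|$ deaths — in particular that the Reidemeister stretches between the births/deaths and $\MV_A,\MV_B$ can be chosen to contain no additional births, deaths, or saddles, so that the Euler characteristic count is exact. This is standard from the construction recalled before the proposition, and (as the authors note) it relies on the fact, stated in~\cite{book_surfaces_in_4space} and implicit in~\cite{livingston_unlink}, that the resulting surface-link is independent of the Reidemeister sequences chosen to trivialize $\MV_A$ and $\MV_B$.
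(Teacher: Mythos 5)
Your argument is correct and follows exactly the route the paper intends: the authors leave this proposition as an exercise, pointing the reader to Lemma~\ref{prop:marked_versted_ineq_cross}, Lemma~\ref{lem:tribrackets_unlink}, and the movie built from $\MV$ (with $|\MV_A|$ births, $v$ saddles, and $|\MV_B|$ deaths), which is precisely what you use for (i) and (ii). Both of your derivations of (ii) --- via $s(F)\leq v$ and Proposition~\ref{prop:movie_inequality_saddle}(ii), or via $|\MV_A|+|\MV_B|=\chi(F)+v$ together with part (i) --- are valid.
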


\section{Triplane diagrams}\label{sec:triplane_diagrams}
A \emph{triplane diagram} $\Tcal$ is a triple of $b$-stranded trivial tangles $(T_1,T_2,T_3)$ such that each $T_i$ is a planar diagram for a trivial tangle and the union of any two $T_i\cup \overline{T}_j$ is an unlink diagram. The number of components of $T_1\cup \overline{T}_2$, $T_2\cup \overline{T}_3$, and $T_3\cup \overline{T}_1$ is denoted by $c_1$, $c_2$, and $c_3$, respectively. In this case, we will refer to the tuple $\Tcal=(T_1,T_2,T_3)$ as a $(b;c_1,c_2,c_3)$-triplane diagram. Triplane diagrams were introduced by Meier and Zupan in~\cite{MZ_bridge_trisections_S4}, where they showed that every surface-link in 4-space can be represented by a triplane diagram. They also proved that any two triplane diagrams $\Tcal$ and $\Tcal'$ representing the same surface-link in 4-space are related by a finite sequence of \emph{triplane moves}:
\begin{enumerate}
\item Interior Reidemeister moves: Reidemeister moves on each tangle occurring away from its endpoints. 
\item Mutual braid transpositions: to apply the same braid transposition $\sigma_j^{\pm 1}$ (point swap) between consecutive endpoints to each tangle; see Figure~\ref{fig:braid_move}.
\item Perturbation/deperturbation moves: a local move that increases/decreases the number of strands on each tangle by one. This move is depicted in Figure~\ref{fig:perturbation_move_general}, up to the mirroring and permutation of the tangles. 
\end{enumerate}

\begin{figure}[h]
\centering
\includegraphics[width=.5\textwidth]{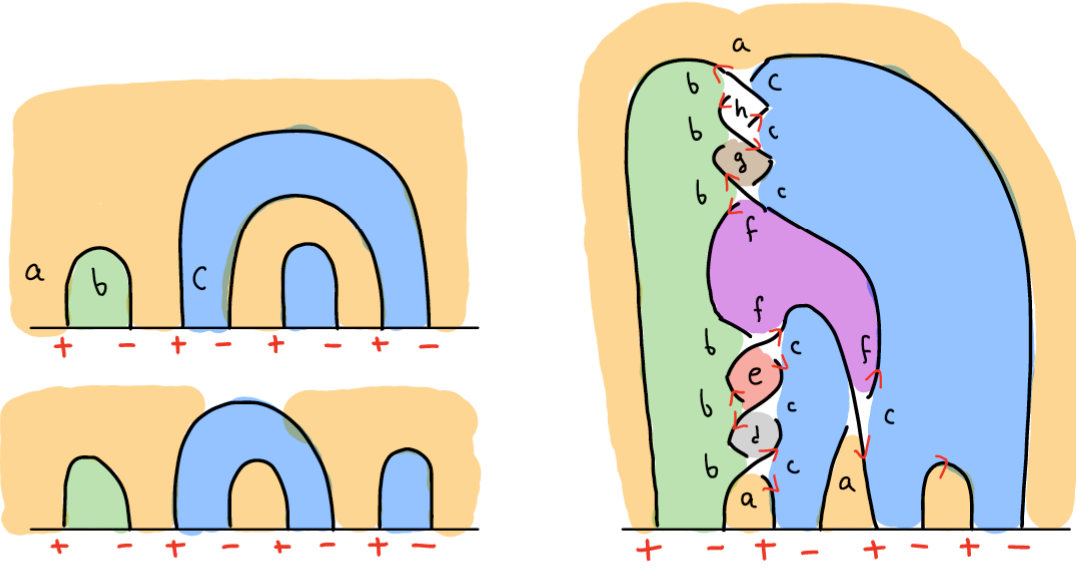}
\caption{Triplane diagram for $9_1$}
\label{fig:9_1}
\end{figure}

\begin{figure}
\centering
\includegraphics[width=.5\textwidth]{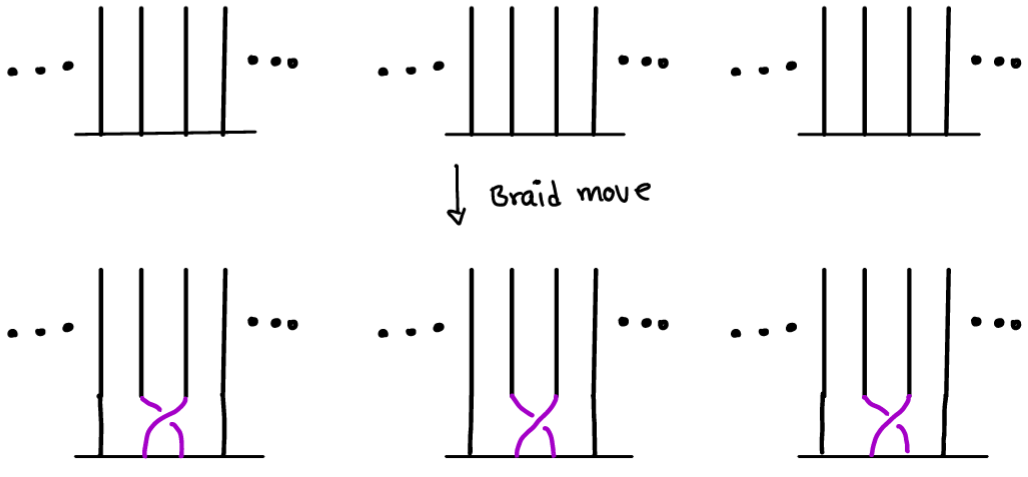}
\caption{Local model of a braid move.}
\label{fig:braid_move}
\end{figure}

An \emph{orientation} of a triplane diagram $\Tcal$ is a choice of orientations for the strands of each tangle of $\Tcal$ such that each endpoint $p\in \{1,\dots, 2b\}$ is a source in each tangle or a sink in each tangle. Equivalently, an orientation of $\Tcal$ is an assignment of $\{+1,-1\}$ to each endpoint of the tangles so that (1) the same endpoints in different tangles have the same label, and (2) every strand of each tangle $T_i$ contains both $\pm 1$. An orientation of $\Tcal$ induces an orientation on each tangle $T_i$ by declaring that the endpoints labeled $+1$ are sinks, and those labeled $-1$ are sources. As shown in Lemma 2.1 of~\cite{MTZ_cubic_graphs}, the orientation of a triplane diagram determines the orientation of the surface-link. Given an oriented triplane diagram $\Tcal=(T_1,T_2,T_3)$, region coloring of the tangles in $\Tcal$ is a \emph{tribracket coloring} if 
\begin{enumerate}
    \item it satisfies the tribracket equations at every crossing of each tangle, and 
    \item near the boundary, the labels between adjacent endpoints are the same in each tangle. 
    \end{enumerate}
For a finite tribracket $X$ and an oriented triplane diagram $\Tcal$, $\col(\Tcal)$ denotes the number of tribracket colorings of $\Tcal$. See Figure~\ref{fig:9_1} for an example of a region coloring that satisfies condition (2); the orientation of the tangles at each crossing forces tribracket relations between the labels; see Table~\ref{table}.

\begin{proposition}\label{prop:invariance_triplane_moves}
Let $\Tcal_1$ and $\Tcal_2$ be two oriented triplane diagrams. If they represent isotopic oriented surface-links, then $\col(\Tcal_1)=\col(\Tcal_2)$.
\end{proposition}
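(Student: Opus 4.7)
My plan is to verify invariance of $\col(\Tcal)$ under each of the three triplane moves separately, closely paralleling the invariance proofs already given for link diagrams and movies.

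\emph{Interior Reidemeister moves.} I would apply Lemma~\ref{lem:col_IS_knot_diagram} to the single tangle $T_i$ being modified. Because the move is supported on a disk disjoint from the boundary of the tangle, the induced bijection on tribracket colorings fixes the color of every region adjacent to the boundary of the ambient disk. The other two tangles are unchanged, and the boundary compatibility condition required for a triplane coloring depends only on those boundary colors, so the bijection on colorings of $T_i$ extends canonically to a bijection on tribracket colorings of the entire triplane diagram.

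\emph{Mutual braid transpositions.} By Lemma~\ref{lem:colors_of_trivial_tangle}, the colorings of each trivial tangle are determined by its boundary region colors. A braid transposition $\sigma_j^{\pm 1}$ inserts a single crossing near the boundary; by the uniqueness property of tribrackets (condition (1) of the definition), the colors of three of the four regions at this new crossing are determined by the old boundary coloring, and the fourth region, which replaces one of the old boundary regions between the swapped endpoints, is uniquely forced. Since the same transposition is applied to all three tangles, the three induced maps on boundary colorings are identical, so the boundary agreement condition is preserved and we obtain a bijection on triplane colorings.

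\emph{Perturbation moves.} This is the main obstacle. The move changes the bridge number by one, adds two new boundary endpoints, and modifies each tangle in a coordinated way inside a small disk neighborhood of the perturbation site. I would first reduce to a single representative case by exploiting symmetry under permutation and mirroring of the tangles, and then construct an explicit bijection between colorings of the unperturbed and perturbed diagrams. The uniqueness property of tribrackets shows that each crossing introduced by the perturbation has a unique consistent extension, and the Reidemeister~III-type equations (condition (2) of the tribracket definition) guarantee that the resulting local system is consistent across all three tangles. The key combinatorial check is that the $|X|^2$ potential choices for the two new boundary regions are exactly matched by the $|X|^2$ worth of constraints imposed by the new local crossings together with the boundary agreement condition, yielding no net change in the coloring count.
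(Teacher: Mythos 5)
Your overall strategy---verifying invariance move-by-move against the three triplane moves---is exactly the paper's, and your treatments of interior Reidemeister moves and mutual braid transpositions are correct and essentially identical to the published argument. The problem is the perturbation case, which you rightly identify as the main obstacle but then only outline. The decisive step you propose, namely that ``the $|X|^2$ potential choices for the two new boundary regions are exactly matched by the $|X|^2$ worth of constraints,'' is a cardinality heuristic rather than a proof: tribracket equations are not linear, so an equal count of free labels and constraints does not imply that the number of solutions is unchanged --- for a given coloring of $\Tcal$ the new constraints could a priori be unsatisfiable, or could fail to be independent, and either failure would change $\col(\Tcal)$. What actually has to be shown, and what the paper reads off from the explicit local model in Figure~\ref{fig:perturbation_move_general}, is that the labels of \emph{all} regions created by the perturbation are \emph{uniquely determined} by the labels of the pre-existing regions, via condition (1) of the tribracket definition at each new crossing together with the boundary-matching rule for triplane colorings; there is no residual freedom to ``balance'' against constraints. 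Once uniqueness of the extension is established in both directions, restriction of colorings is the desired bijection. Your appeal to the Reidemeister~III-type identities (condition (2)) for ``consistency across the three tangles'' is likewise asserted rather than derived from the local model.

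A smaller but related issue: you propose to reduce to a single representative case by permutation and mirroring of the tangles, but the paper points out that there are infinitely many local models for a perturbation, depending on how many strands the perturbing finger passes over. After the finite symmetry reduction you therefore still need an argument uniform in the number of strands involved --- for instance by propagating colors across the new crossings one at a time as in Lemma~\ref{lem:colors_of_trivial_tangle} --- and this is precisely where the uniqueness claim, not the counting claim, does the work.
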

\begin{proof}
It is enough to check that $\col(\Tcal)$ is invariant under triplane moves. The fact that the interior Reidemeister moves do not change the number of tribracket colorings follows from Theorem~\ref{lem:col_IS_knot_diagram}. Suppose that $\Tcal'$ results from performing a mutual braid transposition between adjacent strands as in Figure~\ref{fig:braid_move}. As depicted in the figure, each tangle in $\Tcal'$ has one more region and crossing. Note that the three equations involving the new crossing are identical. Moreover, condition (1) in the definition of a tribracket implies that the color of the new region is determined uniquely by the older three. This induces a bijective correspondence between the tribracket colorings of $\Tcal$ and $\Tcal'$; so $\col(\Tcal)=\col(\Tcal')$. 
\begin{figure}[h]
\centering
\includegraphics[width=.6\textwidth]{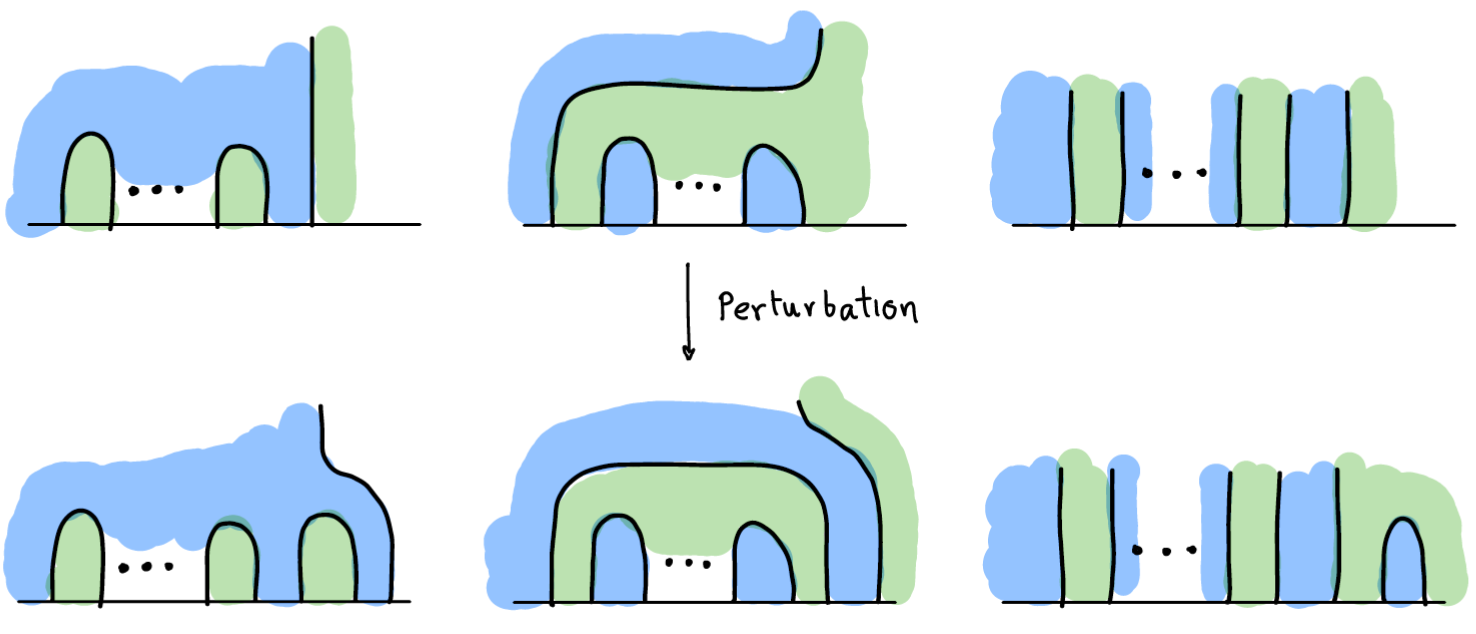}
\caption{Local models for perturbation/deperturbation.}
\label{fig:perturbation_move_general}
\end{figure}

We now discuss the behavior of $\col(\Tcal)$ under perturbation using the local model in Figure~\ref{fig:perturbation_move_general}. Note that there are infinitely many local models for the perturbation depending on the number of strands involved in the model. In any case, the local model forces relations between the colors of the regions involved in the perturbation; see Figure~\ref{fig:perturbation_move_general}. After perturbation, the labels in the new regions extend uniquely using rule (2) of the tribracket coloring of a triplane diagram. Thus $\col(\Tcal)$ is preserved under perturbation. 
\end{proof}

\begin{figure}[h]
\centering
\includegraphics[width=.5\textwidth]{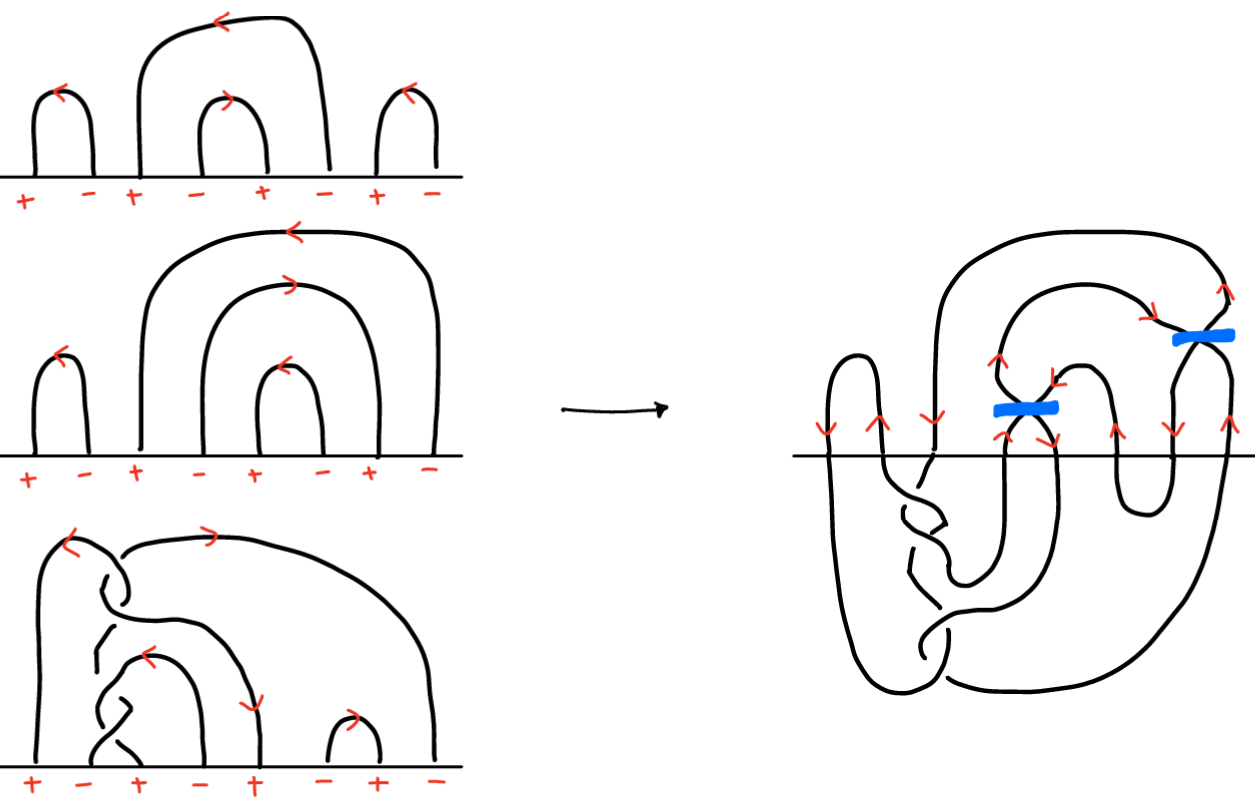}
\caption{A triplane diagram (left) and a marked vertex diagram (right) representing a link of one 2-sphere and one torus. }
\label{fig:triplane_to_marked_2}
\end{figure}
We now explain how to turn a triplane diagram into a marked vertex presentation. This process can be traced back to \cite[Lem 3.3]{MZ_bridge_trisections_S4} to work for arbitrary triplane diagrams. For the sake of simplicity, we will describe a method for a particular set of triplane diagrams called concentrated diagrams. We encourage the curious reader to see how our description is contained in \cite[Sec 4]{Zupan_simple_triplane}.

A triplane diagram is \emph{concentrated} if all its crossings lie in a single tangle. It is known that every triplane diagram can be changed to a concentrated diagram by performing mutual braid moves and interior Reidemeister moves. We are interested in concentrated diagrams with the following property: $T_1$ and $T_2$ are crossingless, and there exists a marked vertex tangle diagram $\MV^{(12)}$ with $(b-c_1)$ marked vertices and no classical crossings such that all-A-smoothing and all-B-smoothing tangle diagrams are equal to the diagrams of $T_1$ and $T_2$, respectively. One way to ensure such a property is to apply further mutual braid moves to our given triplane diagram to turn the tangles $T_1$ and $T_2$ into disjoint unions of the diagrams depicted in Figure~\ref{fig:triplane_to_marked_1}. Some concentrated triplane diagrams already have this property without needing extra braid moves (see Figure~\ref{fig:triplane_to_marked_2}.) Given a concentrated triplane diagram $\Tcal=(T_1,T_2,T_3)$ with all crossings in $T_3$ and a marked vertex diagram $\MV^{(12)}$, we define $\MV_\Tcal=\MV^{(12)}\cup \overline{T}_3$. See Figure~\ref{fig:triplane_to_marked_2} for an example of $\MV_\Tcal$. It follows from~\cite[Lem 3.3]{MZ_bridge_trisections_S4}, that $\MV_\Tcal$ is a marked vertex diagram representing the same surface as $\Tcal$. 

The following result, together with Proposition~\ref{prop:invariance_triplane_moves} imply that $\col(\Tcal)=\col(\MV)$ whenever the triplane $\Tcal$ and the marked vertex diagram $\MV$ represent isotopic oriented surface-links. 
\begin{figure}[h]
\centering
\includegraphics[width=.4\textwidth]{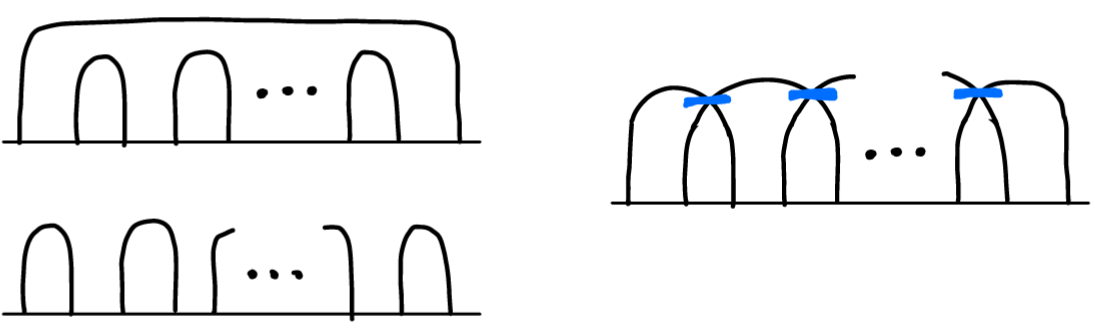}
\caption{Example of markings that take one crossingless tangle to another.}
\label{fig:triplane_to_marked_1}
\end{figure}

\begin{lemma}\label{lem:triplanes_give_movie}
Let $X$ be a tribracket. Let $\Tcal$ be a concentrated triplane diagram and let $\MV_\Tcal=\MV^{(12)}\cup \overline{T}_3$ be a marked vertex diagram described before. There is a bijection between the number of region $X$-colorings of $\Tcal$ and that of $\MV_\Tcal$. In particular, if $X$ is finite, 
\[ \col(\Tcal)=\col(\MV_\Tcal).\] 
\end{lemma}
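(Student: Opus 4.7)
The plan is to construct an explicit bijection between the sets of tribracket colorings of $\Tcal$ and of $\MV_\Tcal=\MV^{(12)}\cup\overline{T}_3$, matching the coloring data on each piece and verifying consistency at the marked vertices. First I would unpack the two notions. A tribracket coloring of $\Tcal$ is a triple of region colorings $(c_1,c_2,c_3)$ of the three tangles satisfying the tribracket equations at each crossing (which occur only in $T_3$, since $T_1$ and $T_2$ are crossingless) and agreeing on boundary arcs of the tangle disk. A tribracket coloring of $\MV_\Tcal$ is a region coloring whose A- and B-smoothings yield tribracket colorings of the link diagrams $T_1\cup\overline{T}_3$ and $T_2\cup\overline{T}_3$; equivalently, the tribracket equations hold at each classical crossing in $\overline{T}_3$, and at each marked vertex of $\MV^{(12)}$ the two pairs of opposite regions (merged by the A- and B-smoothings, respectively) are each monochromatic.

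For the forward direction, given $(c_1,c_2,c_3)$, I would build the coloring of $\MV_\Tcal$ as follows: copy $c_3$ onto $\overline{T}_3$ (which shares its regions with $T_3$ under mirroring), and color each region of $\MV^{(12)}$ by the color $c_1$ assigns to the region of $T_1$ containing it after A-smoothing. The reverse direction takes a coloring of $\MV_\Tcal$, restricts it to $\overline{T}_3$ to recover $c_3$, and applies the A- and B-smoothings of $\MV^{(12)}$ to recover $c_1$ and $c_2$. Boundary compatibility of $(c_1,c_2,c_3)$ is immediate because the three colorings descend from a single coloring of $\MV_\Tcal$ along the shared boundary arcs.

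The main obstacle is verifying that the forward map lands in the tribracket colorings of $\MV_\Tcal$; equivalently, that for every region $R$ of $\MV^{(12)}$ the color assigned by $c_1$ via A-smoothing equals the color assigned by $c_2$ via B-smoothing, so the B-pair monochromaticity at each marked vertex holds. The key input is that $T_1$ and $T_2$ are crossingless tangles in a disk, and every region of such a tangle touches the boundary (by induction on the number of arcs: each added non-crossing arc with boundary endpoints divides an existing region into two, both still adjacent to the disk boundary). Consequently $c_1$ and $c_2$ are entirely determined by the boundary arc colors, which coincide by the triplane boundary condition. For a region of $\MV^{(12)}$ adjacent to a boundary arc the desired agreement follows at once; for an interior region one propagates through the chain of marked-vertex mergers, or alternatively one first applies interior Reidemeister moves and mutual braid transpositions to $\Tcal$ (which preserve $\col(\Tcal)$ by Proposition~\ref{prop:invariance_triplane_moves}) to reduce to a diagram where every region of $\MV^{(12)}$ is connected to the boundary. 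Once this compatibility is established, bijectivity of the two maps is a routine verification.
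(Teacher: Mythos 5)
Your overall strategy is essentially the paper's: both arguments identify a tribracket coloring of $\MV_\Tcal$ with a compatible pair of colorings of the smoothings $T_1\cup\overline{T}_3$ and $T_2\cup\overline{T}_3$ and then match the resulting system of equations with that of $\Tcal$. You are in fact more explicit than the paper about the one genuinely delicate point, namely surjectivity of the forward map: given a tribracket coloring $(c_1,c_2,c_3)$ of $\Tcal$, one must check that the pullbacks $c_1\circ A$ and $c_2\circ B$ define the \emph{same} region coloring of $\MV^{(12)}$. Your argument for this --- every region of a crossingless tangle of arcs in a disk meets the boundary, and the boundary colors of $T_1$ and $T_2$ coincide by condition (2) of a triplane coloring --- is correct and complete for every region of $\MV^{(12)}$ that meets the boundary of the disk, which covers the standard models the paper has in mind (disjoint unions of the pieces in Figure~\ref{fig:triplane_to_marked_1}).

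Two gaps remain. First, your treatment of a hypothetical interior region $R$ of $\MV^{(12)}$ does not go through as stated: ``propagating through the chain of marked-vertex mergers'' only shows that $c_1\circ A$ is constant along chains of A-mergers and $c_2\circ B$ is constant along chains of B-mergers; these two chains emanating from $R$ are different and terminate at boundary arcs that need not lie in a common region of $T_1\cup\overline{T}_2$, so equality of the two values at $R$ does not follow from propagation alone. Your alternative fix --- normalizing $\Tcal$ by interior Reidemeister and mutual braid moves --- changes $\MV_\Tcal$ as well as $\Tcal$, so to transport the conclusion back you must also invoke the invariance of $\col$ for simple marked vertex diagrams established in Section~3.1, not only Proposition~\ref{prop:invariance_triplane_moves}; once both invariances are cited, that reduction is legitimate and closes the gap. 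Second, you silently identify the tribracket equations at a crossing of $T_3$ (oriented as in $\Tcal$) with those at the corresponding crossing of $\overline{T}_3$ (mirrored, and carrying the orientation induced from $\MV_\Tcal$). These could a priori differ, since mirroring and orientation reversal each permute the roles of the four regions in Figure~\ref{fig:tribracket_equations}; the paper flags exactly this worry and disposes of it with the verification in Figure~\ref{fig:mirror}. Your bijection needs that check in order to land in the correct target set, so it should be stated and verified rather than assumed.
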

\begin{proof}
By definition, a tribracket coloring of $\MV_\Tcal$ is determined by tribracket colorings of the all-A-smoothing and all-B-smoothing unlink diagrams. By construction of $\MV_\Tcal$, these two link diagrams equal to $T_1\cup \overline{T}_3$ and $T_2\cup \overline{T}_3$. As all the crossings of $\Tcal$ are concentrated in $T_3$, the set of equations determining a tribracket coloring of $\Tcal$ is the same as for $\MV_\Tcal$. One may worry that the equations for $\MV_\Tcal$ are those of $-\overline{T}_3$. That said, as we see in Figure~\ref{fig:mirror}, they are indeed the same as those of $T_3$. Hence, $\col(\Tcal)=\col(\MV_\Tcal)$. 
\begin{figure}[h]
\centering
\includegraphics[width=.3\textwidth]{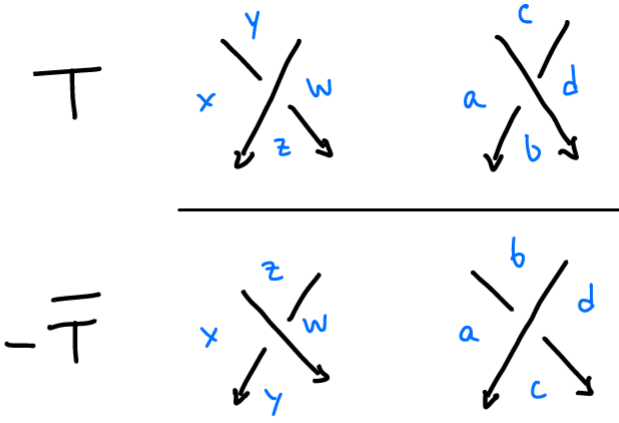}
\caption{The tribracket equations of a tangle and its mirror image with opposite orientation are equal.}
\label{fig:mirror}
\end{figure}
\end{proof}

We now use the region colorings to restrict the patch numbers ($c_i$) and bridge index ($b$). Such quantities can be used to compute the Euler characteristic of the underlying surface. More precisely, if $\Tcal$ is a $(b;c_1,c_2,c_3)$-triplane diagram representing a surface-link $F$, then $\chi(F)=c_1+c_2+c_3-b$. The patch numbers also satisfy that 
\[ \text{rank}\left(\pi_1(S^4-F)\right) \leq \min(c_1,c_2,c_3).\]

\begin{proposition}\label{prop:triplane_inequality_saddle}
    Let $F\subset S^4$ be an oriented surface. Suppose that $F$ admits a $(b;c_1,c_2,c_3)$-triplane diagram. The following inequalities hold for a finite tribracket $X$. 
    \begin{enumerate}[label=(\roman*)]
    \item $-1+\log_{|X|}\left(\col(F)\right)\leq \min(c_1,c_2,c_3)$, and 
    \item $-\chi(F) - 3 +3\log_{|X|}\left(\col(F)\right)\leq b$.
    \end{enumerate}
\end{proposition}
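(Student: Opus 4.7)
The plan is to mirror the strategy of Proposition~\ref{prop:movie_inequality_saddle}: bound $\col(F) = \col(\Tcal)$ by the number of tribracket colorings of each of the three unlinks $T_i \cup \overline{T}_j$ built into the triplane, and then invoke Lemma~\ref{lem:tribrackets_unlink}. The three patch numbers $c_1, c_2, c_3$ fall out for free, because they count the components of these very unlinks.

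First I would show that for each cyclic permutation $(i,j,k)$ of $(1,2,3)$, restriction induces an injection from tribracket colorings of $\Tcal$ into tribracket colorings of the unlink diagram $T_i \cup \overline{T}_j$. Given a tribracket coloring of $\Tcal$, its restrictions to $T_i$ and $T_j$ have matching colors near the boundary by condition~(2) in the definition of a coloring of $\Tcal$, so they glue to a region coloring of $T_i \cup \overline{T}_j$. To see this glued coloring satisfies the tribracket equations at the crossings coming from $\overline{T}_j$, note that in the unlink $\overline{T}_j$ carries the reverse of the orientation on $T_j$ (so that the orientations agree at each endpoint), and Figure~\ref{fig:mirror} shows that mirroring together with reversing orientation leaves the tribracket equations at a crossing unchanged. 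Injectivity follows from Lemma~\ref{lem:colors_of_trivial_tangle}: the boundary colors of $T_i$ determine the coloring of the trivial tangle $T_i$, and by condition~(2) they also determine the boundary colors, and hence the coloring, of the third tangle $T_k$.

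Combining this injection with Proposition~\ref{prop:invariance_triplane_moves}, Lemma~\ref{lem:col_IS_knot_diagram}, and Lemma~\ref{lem:tribrackets_unlink}, one obtains
\[
\col(F) = \col(\Tcal) \leq \col(T_i \cup \overline{T}_j) = |X|^{c_k + 1}
\]
for each $k \in \{1,2,3\}$. Taking $\log_{|X|}$ and minimizing over $k$ yields (i). For (ii), I would sum the three inequalities $-1 + \log_{|X|}\col(F) \leq c_k$ and use the identity $c_1 + c_2 + c_3 = \chi(F) + b$ to obtain $-3 + 3\log_{|X|}\col(F) \leq \chi(F) + b$, which rearranges to the desired bound.

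The main obstacle is the bookkeeping in the construction of the injection: verifying that the orientation-reversed mirror $\overline{T}_j$, glued to $T_i$ along the boundary, really does produce a \emph{tribracket} coloring of the unlink — that is, that the boundary identifications given by condition~(2) are compatible and that the tribracket equations at the mirrored crossings carry over from $T_j$. Once Figure~\ref{fig:mirror} is invoked, this step is formal, and the remainder of the argument is the arithmetic above.
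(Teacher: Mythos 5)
Your proof is correct and follows essentially the same route as the paper's: colorings of $\Tcal$ inject into colorings of the unlink diagram $T_i\cup\overline{T}_j$ because the remaining tangle's coloring is determined by its boundary colors (Lemma~\ref{lem:colors_of_trivial_tangle}), giving $\col(F)\leq|X|^{c+1}$ for each of the three pairs, and summing the three inequalities together with $\chi(F)=c_1+c_2+c_3-b$ gives (ii). The only cosmetic differences are that the paper first concentrates all crossings into one tangle so that $T_1\cup\overline{T}_2$ is crossingless (sidestepping the Figure~\ref{fig:mirror} discussion you supply explicitly), and that under the paper's convention $c_i$ counts the components of $T_i\cup\overline{T}_{i+1}$, so your label $c_k$ should read $c_i$ — neither affects the argument.
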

\begin{proof}
Apply mutual braid moves and Reidemeister moves to turn $\Tcal$ into a concentrated diagram $\Tcal'=(T'_1, T'_2, T'_3)$ with all crossings in $T_3$. As $T_1\cup \overline{T}_2$ is a crossingless $c_1$-component unlink, it has $c_1+1$ independent regions and exactly $|X|^{c_1+1}$ tribracket colorings. Each one of such colorings determines the colors near the boundary of the third tangle, which, by Lemma~\ref{lem:colors_of_trivial_tangle}, determines a possible tribracket coloring for $T_3$. In summary, the tribracket colorings of the unlink $T_1\cup \overline{T}_2$  provide a list of all the possible tribracket colorings of $\Tcal$. Hence, $\col(\Tcal)\leq |X|^{c_1+1}$. Repeating the argument with $T_2\cup \overline{T}_3$ and $T_3\cup \overline{T}_1$ gives us the first conclusion. 
The second conclusion follows from the formula relating the Euler characteristic of $F$ and the invariants of $\Tcal$; 
\[ 
-\chi(F) + 3\left(-1+\log_{|X|}\left(\col(F)\right)\right)
\leq 
-\chi(F) + c_1 + c_2 + c_3 
= 
b.
\]
\end{proof}


\subsection{Multiplane diagrams}
After learning that triplane diagrams describe surface-links, one may wonder if longer tuples of trivial tangles also codify surfaces in 4-space. Such a description was first explored by Islambouli, Karimi, Lambert-Cole, and Meier, who used them to study complex curves in $\mathbb{CP}^1\times \mathbb{CP}^1$ \cite{Meier_toric}. We end this section with a quick summary of the theory of region colorings for these objects. The curious reader is referred to Section 3 of~\cite{AE24_Multisections} for more details. 

Let $n \geq 3$, a $(b;c_1,c_2,\dots, c_n)$-\emph{multiplane diagram} (or $n$-plane diagram) is a tuple $\Ical$ of $b$-stranded trivial tangle diagrams $(T_1, T_2, \dots, T_n)$ such that the union of consecutive tangles $T_i\cup \overline{T}_{i+1}$ and $T_n\cup \overline{T}_1$ form unlink diagrams with $c_i$ and $c_n$ components, respectively. Triplanes are examples of multiplane diagrams. Thus, to emphasize that a multiplane diagram may have more than three tangles, we denote multiplane diagrams with $\Ical$ and triplane diagrams with $\Tcal$. Multiplane diagrams of surfaces describe embeddings of surface-links in 4-space. A surface-link may admit multisections with a smaller bridge number ($b$) at the expense of increasing the number of tuples ($n$). For instance, the unknotted torus in 4-space admits a 3-bridge triplane diagram and a 2-bridge 4-plane diagram, both drawn in Figure~\ref{fig:unknotted_torus}. 

\begin{figure}[h]
\centering
\includegraphics[width=.5\textwidth]{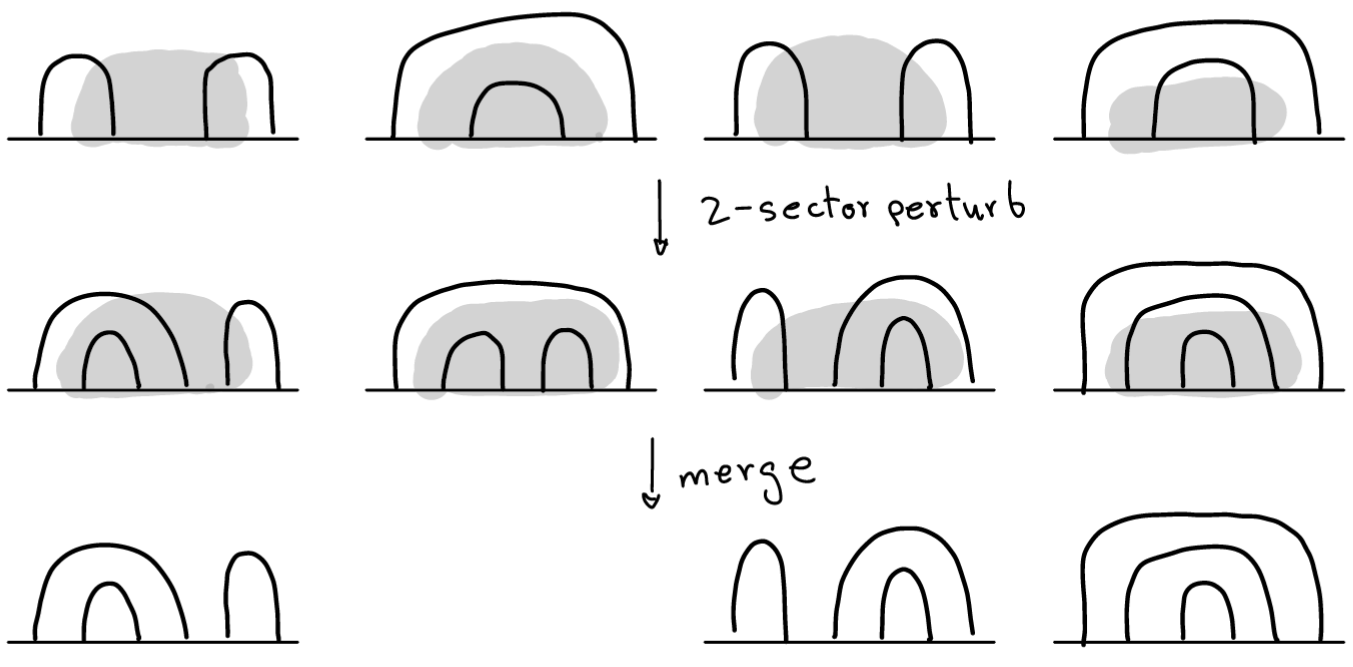}
\caption{Turning a 2-bridge 4-plane diagram of an unknotted torus into a 3-bridge triplane diagram. The shaded regions mark where the local modification from Figure~\ref{fig:two_sector_perturbation} is performed.}
\label{fig:unknotted_torus}
\end{figure}

We can define $\col(\Ical)$ the same way we defined $\col(\Tcal)$; we include it for completeness. A multiplane diagram $\Ical$ is \emph{oriented} if it comes equipped with a consistent assignment of $\{+1,-1\}$ to each puncture so that each strand of each tangle contains both $\pm 1$. For a multiplane diagram $\Ical$, a \emph{tribracket coloring} is a tribracket coloring for each tangle, satisfying that near the boundary of each tangle, the labels between adjacent endpoints are the same on every tangle. For a finite tribracket $X$, $\col(\Ical)$ denotes the number of tribracket colorings of $\Ical$. 

The first author and Engelhardt expanded the set of triplane moves to work for multiplane diagrams~\cite{AE24_Multisections}. They defined two new moves that can turn any multiplane diagram into a triplane diagram of the same surface. The first new move is called the \emph{merge move}, which turns a $(n+1)$-plane diagram into an $n$-plane diagram by removing one redundant tangle from the tuple. For context, we say that a triplane diagram $\Tcal=(T_1,T_2,T_3)$ representing an unlink of $m$ 2-spheres is completely decomposable if it can be obtained by multiple perturbations of the crossingless  $(m;m,m,m)$-triplane diagram. In~\cite[Lem 5.4]{AE24_Multisections}, it was proven that the multiplane diagrams $\Ical=(T_1,T_2,T_3, \dots, T_n)$ and $\Ical'=(T_1,T_3,\dots, T_n)$ represent isotopic surfaces if $\Tcal_0=(T_1,T_2,T_3)$ is a completely decomposed diagram for an unlink of $m$ 2-spheres, where $m=|T_1\cup \overline{T}_3|$. In this case, we say that $\Ical'$ results from a \emph{merge move} on $\Ical$. 

\begin{figure}[h]
\centering
\includegraphics[width=.6\textwidth]{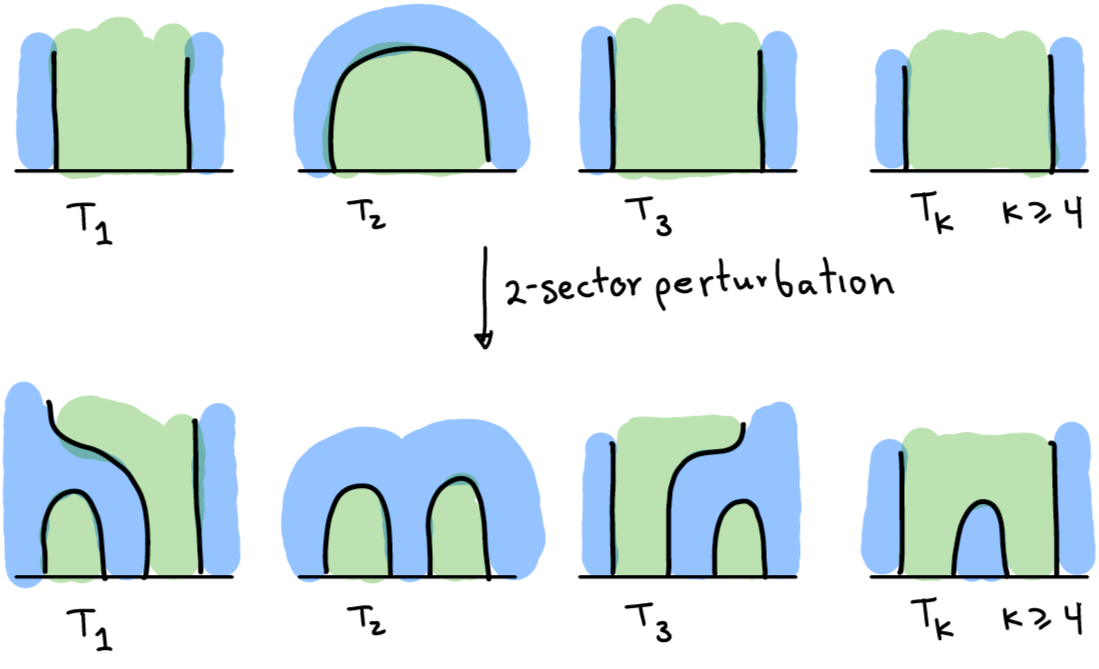}
\caption{The local model of a specific 2-sector perturbation.}
\label{fig:two_sector_perturbation}
\end{figure}
The second move is called a \emph{2-sector perturbation}, which increases the number of strands on each tangle by one. As with triplane diagrams, this is given by a local model that encompasses many kinds of 2-sector perturbations. In Figure~\ref{fig:two_sector_perturbation}, we draw one particular model needed to state the following result.

\begin{lemma}[Prop 5.7 of~\cite{AE24_Multisections}]\label{lem:multi_yield_tris}
Let $\Ical$ be a multiplane diagram. There is a sequence of interior Reidemeister moves, mutual braid moves, merge moves, and 2-sector perturbations (as in Figure~\ref{fig:two_sector_perturbation}) turning $\Ical$ into a triplane diagram $\Tcal$ representing the same surface-link. 
\end{lemma}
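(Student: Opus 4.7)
The natural approach is induction on $n$, the number of tangles in the multiplane diagram $\Ical$. The base case $n = 3$ is immediate since $\Ical$ is already a triplane diagram. For the inductive step, it suffices to exhibit a sequence of interior Reidemeister moves, mutual braid moves, and 2-sector perturbations after which three consecutive tangles $(T_i, T_{i+1}, T_{i+2})$ of the current diagram form a completely decomposed triplane diagram for an unlink of 2-spheres; at that point, the merge move of Lemma 5.4 of~\cite{AE24_Multisections} deletes $T_{i+1}$ and produces an $(n-1)$-plane diagram representing the same surface-link, reducing to the inductive hypothesis.

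To engineer a merge-ready sub-triplane, I would first apply interior Reidemeister moves and mutual braid moves to place $T_i$, $T_{i+1}$, and $T_{i+2}$ into a convenient normal form. Then I would insert 2-sector perturbations along the two interfaces $(T_i, T_{i+1})$ and $(T_{i+1}, T_{i+2})$ to raise the bridge number in a controlled manner. Recall that the standard completely decomposed $(m;m,m,m)$-triplane of an unlink of $m$ 2-spheres is obtained from the crossingless one by iterated perturbations; the plan is to mimic this perturbation pattern by placing 2-sector perturbations at carefully chosen locations along the two interfaces surrounding $T_{i+1}$, while using interleaved mutual braid moves to keep the newly created strands aligned with the standard template. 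Once the standard decomposed form is attained, the merge move applies.

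The main obstacle is combinatorial. A 2-sector perturbation is local and modifies two consecutive tangles at once, so perturbations along $(T_i, T_{i+1})$ and along $(T_{i+1}, T_{i+2})$ both alter $T_{i+1}$ and must be coordinated. Making this precise requires a bookkeeping argument that tracks, at each stage, the isotopy class of $T_i \cup \overline{T}_{i+1}$ and $T_{i+1} \cup \overline{T}_{i+2}$ as unlinks of circles, and checks that the partial construction extends consistently to a full decomposition. One should also verify that the interior Reidemeister moves inside $T_{i+1}$ do not interfere with the perturbation scheme on its two boundary interfaces. With this interleaving in hand, the triple $(T_i, T_{i+1}, T_{i+2})$ realizes a completely decomposed diagram of an unlink of 2-spheres, the merge move of~\cite[Lem 5.4]{AE24_Multisections} fires, and the induction closes.
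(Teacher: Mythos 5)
First, a point of comparison: the paper does not prove this lemma at all --- it is imported verbatim as Proposition 5.7 of~\cite{AE24_Multisections}, so there is no in-paper argument to measure your proposal against. Your skeleton (induct on $n$, manufacture a merge-ready consecutive triple, fire the merge move of~\cite[Lem 5.4]{AE24_Multisections}) is the right shape and matches the strategy of the cited source.

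The gap is that the entire content of the lemma is concentrated in the step you defer. You must produce a consecutive triple $(T_i,T_{i+1},T_{i+2})$ that is a \emph{completely decomposed} triplane diagram for an unlink of 2-spheres, and your sketch only says you would ``mimic the perturbation pattern'' at ``carefully chosen locations'' and then do ``bookkeeping.'' Two concrete problems are left unaddressed. First, the multiplane axioms only guarantee that \emph{consecutive} unions $T_i\cup\overline{T}_{i+1}$ are unlinks; nothing forces $T_i\cup\overline{T}_{i+2}$ to be an unlink diagram, which is a precondition for $(T_i,T_{i+1},T_{i+2})$ to be a triplane diagram at all, let alone a completely decomposed one. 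Second, and more fundamentally, the reason a suitable sequence of perturbations and braid moves \emph{exists} is 4-dimensional: the surface meets each sector in a trivial (boundary-parallel) disk system, and one must use this parallelism --- together with standardization results for bridge positions of unlinks in the spirit of~\cite{livingston_unlink} --- to isotope the middle tangle into the standard decomposed template. A purely 2-dimensional combinatorial tracking of the unlinks $T_i\cup\overline{T}_{i+1}$ and $T_{i+1}\cup\overline{T}_{i+2}$ cannot certify that the target configuration is reachable, because those two unlink types do not determine the triple up to the allowed moves. As written, the proposal restates the goal rather than proving it; to close the induction you need to either reproduce the geometric argument of~\cite{AE24_Multisections} or supply an independent proof that the disk systems can be standardized.
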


We are ready to show that counting tribracket colorings of multiplane diagrams is the same as that of triplane diagrams. Hence, $\col(\Ical)=\col(F)$ is an invariant of the surface-link $F$ represented by $\Ical$. 

\begin{proposition}\label{prop:invariance_multiplane_moves}
If two oriented multiplane diagrams $\Ical_1$ and $\Ical_2$ represent isotopic oriented surface-links, then $\col(\Ical_1)=\col(\Ical_2)$.
\end{proposition}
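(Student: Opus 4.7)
The plan is to reduce to the triplane case using Lemma~\ref{lem:multi_yield_tris} and apply Proposition~\ref{prop:invariance_triplane_moves}. Given isotopic oriented multiplane diagrams $\Ical_1$ and $\Ical_2$, Lemma~\ref{lem:multi_yield_tris} produces triplane diagrams $\Tcal_1$ and $\Tcal_2$ representing the same surface-link, and Proposition~\ref{prop:invariance_triplane_moves} then gives $\col(\Tcal_1)=\col(\Tcal_2)$. Thus it suffices to check that each of the four multiplane moves appearing in Lemma~\ref{lem:multi_yield_tris}---interior Reidemeister moves, mutual braid moves, 2-sector perturbations, and merge moves---preserves $\col(\Ical)$.

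The first three moves are straightforward adaptations of the corresponding arguments from Proposition~\ref{prop:invariance_triplane_moves}. Interior Reidemeister invariance reduces, via Lemma~\ref{lem:col_IS_knot_diagram}, to a tangle-by-tangle check. A mutual braid move inserts one new crossing and one new region into each tangle, and condition~(1) of the tribracket axioms uniquely determines the new color from its three neighbors, yielding a bijection. For the 2-sector perturbation of Figure~\ref{fig:two_sector_perturbation}, the new crossings' tribracket equations together with rule~(2) in the definition of a multiplane coloring uniquely extend any coloring of the unperturbed diagram to one of the perturbed diagram.

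The main obstacle is the merge move. Let $\Ical=(T_1,T_2,T_3,\ldots,T_n)$ and $\Ical'=(T_1,T_3,\ldots,T_n)$, with $\Tcal_0=(T_1,T_2,T_3)$ a completely decomposable triplane for an $m$-component unlink of 2-spheres. By Lemma~\ref{lem:colors_of_trivial_tangle}, a coloring of each trivial tangle is uniquely determined by its boundary region coloring; writing $V_i$ for the subset of boundary colorings that extend over $T_i$, we have $\col(\Ical)=|V_1\cap V_2\cap\cdots\cap V_n|$ and $\col(\Ical')=|V_1\cap V_3\cap\cdots\cap V_n|$. It therefore suffices to prove the inclusion $V_1\cap V_3\subseteq V_2$. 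I would do so by induction on the number of triplane perturbations needed to obtain $\Tcal_0$ from the crossingless $(m;m,m,m)$-triplane: in the base case one can choose a symmetric representative with $T_1=T_2=T_3$, so that $V_1=V_2=V_3$ and the inclusion is immediate; in the inductive step, a single triplane perturbation acts locally and in parallel across the three tangles, so the bijections $V_i'\leftrightarrow V_i$ supplied by the perturbation argument are induced by a common forgetting operation on boundary colorings, transferring the inclusion $V_1\cap V_3\subseteq V_2$ to $V_1'\cap V_3'\subseteq V_2'$. The subtlest point will be verifying that these three bijections are genuinely compatible, which I expect to follow from the locality of perturbation and condition~(1) of the tribracket axioms.
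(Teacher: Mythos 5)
Your overall structure---reduce to the triplane case via Lemma~\ref{lem:multi_yield_tris} and verify invariance under the four moves---is exactly the paper's, and your treatment of interior Reidemeister moves, mutual braid moves, and 2-sector perturbations matches the paper's proof. The divergence is the merge move. The paper dispatches it with a short counting argument: since $\Tcal_0=(T_1,T_2,T_3)$ represents an unlink of $m$ 2-spheres and $T_1\cup\overline{T}_3$ is an $m$-component unlink diagram, Lemma~\ref{prop:colorings_unknotted_surface} (together with Proposition~\ref{prop:invariance_triplane_moves}) and Lemma~\ref{lem:tribrackets_unlink} give $\col(\Tcal_0)=|X|^{m+1}=\col(T_1\cup\overline{T}_3)$; the restriction map from colorings of $(T_1,T_2,T_3)$ to colorings of $(T_1,T_3)$ is injective, and an injection between finite sets of equal cardinality is a bijection --- which is precisely your inclusion $V_1\cap V_3\subseteq V_2$. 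This route is shorter and requires no analysis of perturbation local models.

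Your proposed induction can likely be completed, but the inductive step as stated is not correct: a perturbation does not act ``in parallel'' on the three tangles, and the maps $V_i'\to V_i$ are not a common family of bijections. In the local model two of the tangles receive a new trivial $\cup$-shaped arc, for which $V_i'\cong V_i\times X$ because the color of the new boundary region is a free parameter, while the third receives the finger, for which $V_i'=\{(x,f(x)):x\in V_i\}$ with $f$ determined by the old coloring. If the finger lands in $T_1$ or $T_3$, the inclusion transfers as you hope; but if it lands in $T_2$, then $V_1'\cap V_3'=(V_1\cap V_3)\times X$, which is not contained in the graph $\{(x,f(x)):x\in V_2\}$ unless $|X|=1$, and the induction breaks. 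The step is rescued only by observing that the merge-move hypothesis $m=|T_1\cup\overline{T}_3|$ forbids any perturbation that increases $|T_1\cup\overline{T}_3|$, i.e., the finger never lands in $T_2$ in a completely decomposable $\Tcal_0$ satisfying that hypothesis. This missing case analysis is the concrete gap: either supply it, or replace the induction with the paper's counting argument.
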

\begin{proof} 
Let $\Ical$ be an oriented multisection and let $\Tcal$ be the oriented multisection obtained from Lemma~\ref{lem:multi_yield_tris}. To show that $\col(\Ical)=\col(\Tcal)$, it is enough to check the invariance of $\col$ under interior Reidemeister moves, mutual braid moves, merge moves, and 2-sector perturbations as in Figure~\ref{fig:two_sector_perturbation}. The proofs of the first two are the same as in Proposition~\ref{prop:invariance_triplane_moves}. The local model in Figure~\ref{fig:two_sector_perturbation} forces relations between the colors of the regions involved in a 2-sector perturbation; thus, $\col(\Ical)$ does not change under a 2-sector perturbation.

Let $\Ical'$ result from a merge move on $\Ical$. By definition of the merge move, the tuple $\Tcal_0=(T_1,T_2,T_3)$ is a triplane diagram for an unlink of $m=|T_1\cup \overline{T}_3|$ 2-spheres. So Lemmas~\ref{lem:tribrackets_unlink} and~\ref{prop:colorings_unknotted_surface} imply that $\col(\Tcal_0)$ and $\col(T_1\cup \overline{T}_3)$ are equal. Furthermore, there is a bijective correspondence between the tribracket colorings of $(T_1,T_2,T_3)$ and those of $(T_1,T_3)$. In other words, the tribracket equations from $T_2$ are redundant when computing $\col(\Ical)$. Hence, $\col(\Ical)=\col(\Ical')$. 

We have shown that $\col(\Ical)$ and $\col(\Tcal)$ are equal. The proposition follows from the fact that triplane diagrams for isotopic oriented surfaces have the same number of colorings; Proposition~\ref{prop:invariance_triplane_moves}.
\end{proof} 

We end this section by providing bounds between the number of tribracket colorings, the patch numbers ($c_i$), and the bridge index of multiplane diagrams (b). These quantities are related to the underlying surface by the equation $\chi(F)=(2-n)b + \sum_{i=1}^n c_i$. The proof of Lemma~\ref{prop:multisec_ineq_cross} and Proposition~\ref{prop:multis_inequality_bridge} are similar to that of Proposition~\ref{prop:triplane_inequality_saddle}. We leave it as an exercise to the reader.

\begin{lemma}\label{prop:multisec_ineq_cross}
    Let $\Ical$ be a multiplane diagram representing a surface-link $F\subset S^4$. If $L=T_i\cup \overline{T}_j$ is a link diagram obtained by gluing any pair of tangles of $\Ical$, then $$\col(\Ical)\leq \col(L).$$
    \end{lemma}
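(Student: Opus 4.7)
The plan is to construct an injection from the set of tribracket colorings of $\Ical$ into the set of tribracket colorings of $L=T_i\cup \overline{T}_j$, mimicking the argument used in the proof of Proposition~\ref{prop:triplane_inequality_saddle}. Given a tribracket coloring of $\Ical$, I would first restrict it to the two tangles $T_i$ and $T_j$. By condition~(2) in the definition of a tribracket coloring of a multiplane diagram, the labels of the boundary arcs agree across all tangles, so the restrictions to $T_i$ and $T_j$ have matching boundary data and glue along the shared boundary to give a labeling of all regions of $L$. Every crossing of $L$ lies in $T_i$ or in $\overline{T}_j$, and the restricted colorings satisfy the tribracket equations within each tangle, so the glued assignment is a tribracket coloring of $L$.

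Injectivity of this restriction-and-glue map follows from Lemma~\ref{lem:colors_of_trivial_tangle}, which says that the tribracket coloring of a trivial tangle is determined by the labels of the regions near its boundary. Therefore a tribracket coloring of $\Ical$ is determined by its common boundary data alone. If two tribracket colorings of $\Ical$ restrict to the same tribracket coloring of $L$, they agree on the boundary of $T_i$ and hence share the same boundary data; applying Lemma~\ref{lem:colors_of_trivial_tangle} to every remaining tangle $T_k$ of $\Ical$ then forces the two colorings to coincide everywhere in $\Ical$.

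I expect the main subtlety to be in justifying that the restriction-and-glue map produces a valid tribracket coloring of $L$, in particular verifying that the tribracket equations for $\overline{T}_j$ (with its induced, reversed orientation as a summand of $L$) coincide with those for $T_j$. This is the local observation already used in the proof of Lemma~\ref{lem:triplanes_give_movie} (see Figure~\ref{fig:mirror}). I would also remark that when $i$ and $j$ are not consecutive, $L$ need not be an unlink; however, $\col(L)$ is defined for arbitrary link diagrams and the argument applies without change. The existence of the injection then yields $\col(\Ical)\leq \col(L)$, as claimed.
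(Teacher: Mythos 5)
Your argument is correct and is exactly the fleshed-out version of what the paper intends: the paper leaves this lemma as an exercise, noting it is analogous to Proposition~\ref{prop:triplane_inequality_saddle}, whose proof is the same restriction-is-injective idea (boundary data determines each trivial tangle's coloring via Lemma~\ref{lem:colors_of_trivial_tangle}, and the mirror/orientation issue is handled by Figure~\ref{fig:mirror}). Nothing further is needed.
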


\begin{proposition}\label{prop:multis_inequality_bridge}
    Let $\Ical$ be a $(b;c_1,\dots, c_n)$-multiplane diagram representing a surface-link $F\subset S^4$. The following inequalities hold for a finite tribracket $X$. 
    \begin{enumerate}[label=(\roman*)]
    \item $-1+\log_{|X|}\left(\col(F)\right)\leq \min \left(c_1, \dots, c_n\right)$, and 
    \item $-n-\chi(F) + n\cdot \log_{|X|}\left(\col(F)\right)\leq (n-2)b$.
    \end{enumerate}
\end{proposition}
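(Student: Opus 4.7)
The plan is to mimic the proof of Proposition~\ref{prop:triplane_inequality_saddle} almost verbatim, only now using Lemma~\ref{prop:multisec_ineq_cross} in place of the triplane-specific bound and exploiting the Euler characteristic identity $\chi(F)=(2-n)b+\sum_{i=1}^n c_i$ recalled just before the statement. The multiplane setup is tailor-made for this: consecutive pairs $T_i\cup \overline{T}_{i+1}$ (and $T_n\cup\overline{T}_1$) are unlink diagrams with $c_i$ components, so we have $n$ independent ``unlink bounds'' on $\col(\Ical)$, one per pair.

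For (i), fix an index $i$ and consider the link $L=T_i\cup \overline{T}_{i+1}$ (indices mod $n$). By hypothesis $L$ is a $c_i$-component unlink, and since a crossingless diagram of such an unlink has $c_i+1$ complementary regions, Lemma~\ref{lem:tribrackets_unlink} gives $\col(L)=|X|^{c_i+1}$. Lemma~\ref{prop:multisec_ineq_cross} then yields $\col(\Ical)\leq |X|^{c_i+1}$, that is, $-1+\log_{|X|}\col(F)\leq c_i$. Since $i$ was arbitrary, we may take the minimum over $i\in\{1,\dots,n\}$ to obtain the first inequality.

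For (ii), I would sum the $n$ inequalities from (i) instead of minimizing:
\[
n\bigl(-1+\log_{|X|}\col(F)\bigr)\;\leq\;\sum_{i=1}^n c_i\;=\;\chi(F)+(n-2)b,
\]
where the last equality is the Euler characteristic formula stated above the proposition. Rearranging gives $-n-\chi(F)+n\log_{|X|}\col(F)\leq (n-2)b$, as desired. Both inequalities specialize back to the triplane case $n=3$ of Proposition~\ref{prop:triplane_inequality_saddle}, which is a useful sanity check.

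There is no real obstacle here: the only content beyond Proposition~\ref{prop:triplane_inequality_saddle} is the observation that Lemma~\ref{prop:multisec_ineq_cross} gives \emph{one} unlink bound per cyclic pair, so the symmetry and the Euler identity do all the work. One small thing to verify while writing the proof is that the unlink bound actually applies to every cyclic pair (including $T_n\cup\overline{T}_1$), which is part of the definition of a multiplane diagram, so nothing extra is needed.
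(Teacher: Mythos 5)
Your proof is correct and follows exactly the route the paper intends: the paper leaves this proposition as an exercise, stating only that the argument is similar to Proposition~\ref{prop:triplane_inequality_saddle}, and your adaptation --- one unlink bound per consecutive pair via Lemma~\ref{prop:multisec_ineq_cross} and Lemma~\ref{lem:tribrackets_unlink}, then minimizing for (i) and summing against the Euler characteristic identity $\chi(F)=(2-n)b+\sum_i c_i$ for (ii) --- is precisely that argument. The $n=3$ consistency check is a nice touch but nothing further is needed.
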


\section{Computations}\label{sec:computations}
The following two results are computations of $\col(F)$ in two particular instances: when $X=X_A$ is the Dehn tribracket of an abelian group (Theorem~\ref{thm:abelian_trib}), and when $F$ is a spun knot (Theorem~\ref{thm:spun_knots}). We will use the $\colA(F)$ formulation for triplane diagrams. Note that the tribracket equations for a triplet of tangle diagrams with the same endpoints still make sense regardless of the tangles being trivial or their pairwise union being an unlink. We make this observation because the argument for Theorem~\ref{thm:abelian_trib} below will count the tribracket colorings of some \emph{fake triplane diagrams}. 

Recall that if $A$ is a group, the Dehn tribracket of $A$ is the set $X_A=A$ with the tribracket structure given by $[a,b,c]=ba^{-1}c$. If $A$ is an abelian group, we obtain that $[a,b,c]=-a+(b+c)=[a,c,b]$, and the tribracket equation corresponding to a crossing is the same regardless of which strand goes over/under; see Figure~\ref{fig:tribracket_equations}. Thus, crossing changes do not change the number of $X_A$-tribracket colorings of a triplane diagram. 
Another class of tribrackets that do not see crossings are commutative tribrackets, which satisfy $[a,b,c]=[b,c,a]=[a,c,b]=[c,b,a]$ \cite{Niebrzydowski_tribracket_2}.

\newtheorem*{thm_abelian_trib}{Theorem \ref{thm:abelian_trib}}
\begin{thm_abelian_trib}
Let $A$ be a finite abelian group with associated Dehn tribracket denoted by $X_A$. For any orientable surface-link $F\subset S^4$, the number of $X_A$-colorings is equal to 
\[ 
\text{Col}_{X_A}^R(F)=|A|^{|F|+1}.
\]
\end{thm_abelian_trib}
\begin{proof}
Let $\Tcal$ be an oriented triplane diagram representing $F$. Corollary 1.2 of~\cite{MTZ_cubic_graphs} states that $\Tcal$ can be converted into a tri-plane diagram $\Tcal_0$ for an unknotted surface by a sequence of crossing changes and interior Reidemeister moves. Lemma~\ref{lem:col_IS_knot_diagram} and the assumption that $A$ is abelian imply that all the triplane diagrams in such a sequence have the same number of tribracket colorings. Notice that this is true regardless of whether the sequence taking $\Tcal$ into $\Tcal_0$ passes through some fake triplane diagrams. To end, since $\Tcal_0$ represents an unknotted oriented surface, Proposition~\ref{prop:colorings_unknotted_surface} implies that $\colA(\Tcal_0)=|X_A|^{|F|+1}$. The result follows $$\col(F)=\col(\Tcal)=\colA(\Tcal_0)=|X_A|^{|F|+1}=|A|^{|F|+1}.$$ 
\end{proof}

Artin introduced spun knots as a way to build 4-dimensional knotted spheres from classical knots~\cite{Artin_spinning}. Informally, spinning a knot is the process of building a surface of revolution from a knotted arc, the same way one does in a Calculus class, but one dimension higher. More precisely, consider a knot $K$ in $\R^3$ that lies in the upper-half plane $H^3=\{(x,y,z)| z\geq 0\}$ except for an unknotted arc that dips below the $xy$-plane. The spin of $K$ is the result of rotating the arc $K\cap H^3$ along the $xy$-plane as follows: if $K\cap H^3$ is parametrized by $\left(x(t), y(t), z(t)\right)$, then $\Scal(K)$ is parametrized in $\R^4$ by $\left(x(t), y(t),z(t)\cos{\theta}, z(t)\sin{\theta}\right)$. See \cite[Sec 3]{friedman_spinning} for more details and further generalizations of Artin's spinning construction. Meier and Zupan described a triplane diagram of $\Scal(K)$ that takes as input a diagram of $K$ as in Figure~\ref{fig:spun_knot}~\cite{MZ_bridge_trisections_S4}. 

\begin{figure}[h]
\centering
\includegraphics[width=1\textwidth]{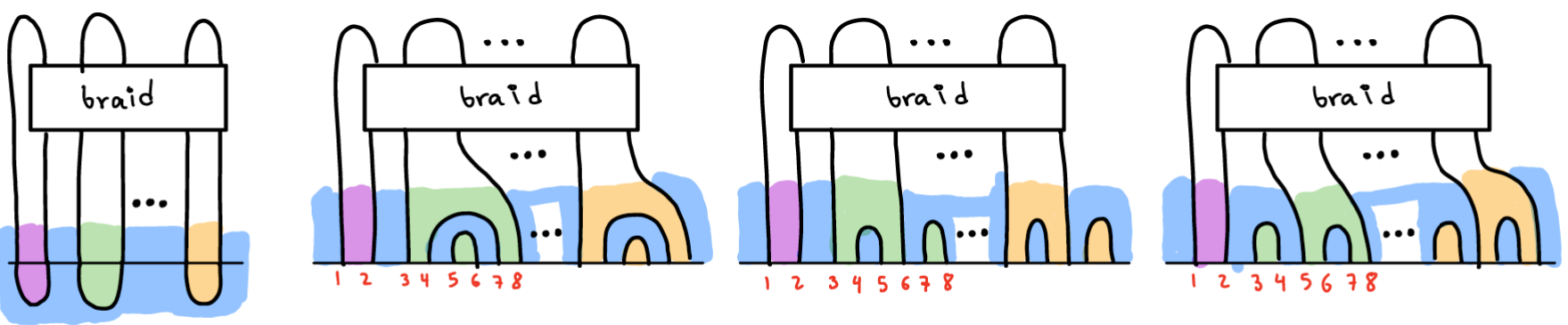}
\caption{(Right) A triplane diagram for $\Scal(K)$ in terms of a plat diagram for $K$ (left).}
\label{fig:spun_knot}
\end{figure}

\newtheorem*{thm_spun_knots}{Theorem \ref{thm:spun_knots}}
\begin{thm_spun_knots}Let $K$ be an oriented knot in $S^3$. There is a bijection between the number of tribracket colorings of $K$ and that of the spun of $K$. In particular, if $X$ is a finite tribracket, then 
\[ 
\col(K) = \col(\Scal(K)).
\]
\end{thm_spun_knots}
\begin{proof}
We pick a knot diagram of $K$ and a triplane diagram $\Tcal$ of $\Scal(K)$ as in Figure~\ref{fig:spun_knot}; note that the braid box in the diagram of $K$ is the same as in each tangle of the triplane diagram of $\Scal(K)$. Furthermore, each local minimum of $K$, except for the left-most one, gets replaced with six consecutive punctures in the tangles of $\Tcal$. For instance, the second local minimum gives rise to punctures 3 through 8. In the figure, we see how a choice of colors near the local minima of $K$ uniquely determines a choice of colors for each tangle of $\Tcal$ satisfying condition (2), and vice-versa. Moreover, as the braid boxes are the same in all tangles of $\Tcal$ and $K$, we note that each coloring of $K$ is a tribracket coloring if and only if the corresponding coloring of $\Tcal$ satisfies the tribracket equations. One can see this as the sub-diagrams of $K$ and $\Tcal$ left to color in Figure~\ref{fig:spun_knot} are all equal. By Lemma~\ref{lem:colors_of_trivial_tangle}, choices of colors near the boundary of each tangle determine unique colorings for each $K$ and $\Tcal$. Thus, $\col(K)=\col(\Scal(K))$. 
\end{proof}

\begin{example}
Table~\ref{table} compiles the coloring equations needed to compute $\col(F)$ for the 2-spheres in Yoshikawa's knotted surface table~\cite{Yoshi94_enumeration}. We found our equations using the triplane diagrams provided in \cite{Zupan_simple_triplane}. One can find the equations of $-F$ quickly from those of $F$ by replacing $[x,y,z]=w$ with $x=[w,y,x]$; see Table~\ref{table}.

Our \texttt{python} computations (in Appendix~\ref{sec:code}) reveal that the 2-knots $9_1$ and $10_3$ are non-invertible; that is, $9_1\not= -9_1$ and $10_3\not=-10_3$. Thus, Theorem~\ref{thm:noninvertible} holds. The tribracket structure that detects this behavior is $X_3=\{1, 2, 3\}$ given by the 3-tensor from \cite[Ex 8]{Nelson_Polynomial}
\[
\begin{bmatrix}
\begin{bmatrix}
1 & 3 & 2 \\
3 & 2 & 1 \\
2 & 1 & 3 
\end{bmatrix}
&
\begin{bmatrix}
3 & 2 & 1 \\
2 & 1 & 2 \\
1 & 3 & 2 
\end{bmatrix}
&
\begin{bmatrix}
2 & 1 & 3 \\
3 & 2 & 1 \\
1 & 3 & 2 
\end{bmatrix}
\end{bmatrix}.
\]
\end{example}

\begin{table}
\caption{2-spheres in Yoshikawa table}
\label{table}
\begin{tabular}{||c | p{2cm} | c | p{8cm} | c ||} 
 \hline
 Surface & Orientation & Variables & Equations & $\text{Col}^{R}_{X_3}(F)$ \\ [0.5ex] 
 \hline\hline
 $0_1$ & Unknot $S^2$ & $a,b$ &   & 9\\ 
 \hline
 $8_1$ & Figure~\ref{fig:trefoil} & $a,\dots, e$ &  $[a,b,c]=e$, $[a,c,d]=e$, $[a,d,b]=e$ & 15\\ 
 \hline
 $-8_1$ & Figure~\ref{fig:trefoil} $\quad$ Reversed & $a,\dots, e$ &  $a=[e,c,b]$, $a=[e,d,c]$, $a=[e,b,d]$ & 15\\ 
 \hline
 $9_1$ & Figure~\ref{fig:9_1} & $a,\dots, f$ & $d=[a,c,b]$, $d=[e,b,c]$, $f=[e,c,b]$, $f=[a,c,c]$, $f=[g,c,b]$, $h=[g,b,c]$, $h=[a,c,b]$ & \textbf{25}\\ 
 \hline
 $-9_1$ & Figure~\ref{fig:9_1} $\quad$ Reversed & $a,\dots, f$ &  $[d,b,c]=a$, $[d,c,b]=e$, $[f,b,c]=e$, $[f,c,c]=a$, $[f,b,c]=g$, $[h,c,b]=g$, $[h,b,c]=a$ & \textbf{21}\\ 
 \hline
 $10_1$ & Figure~\ref{fig:trefoil} & $a,\dots, f$ & $a=[c,b,f]$, $d=[c,f,b]$, $e=[b,a,d]$, $e=[f,d,a]$ & 13\\ 
 \hline
 $-10_1$ & Figure~\ref{fig:trefoil} $\quad$ Reversed & $a,\dots, f$ &  $[a,f,b]=c$, $[d,b,f]=c$, $[e,d,a]=b$, $[e,a,d]=f$ & 13\\ 
 \hline
 $10_2$ & Figure~\ref{fig:10_2} & $a, \dots, j$ & $[e,b,d]=c$, $[a,d,d]=c$, $[a,b,b]=c$, $[e,d,b]=f$, $[a,b,d]=f$, $[i,b,d]=c$, $[g,b,d]=c$, $[i,d,b]=j$, $[a,b,d]=j$, $[g,d,b]=h$, $[a,b,d]=h$ & 37\\ 
 \hline
 $-10_2$ & Figure~\ref{fig:10_2} $\quad$ Reversed & $a,\dots, j$ & $[c,d,b]=e$, $[c,d,d]=a$, $[c,b,b]=a$, $[f,b,d]=e$, $[f,d,b]=a$, $[c,d,b]=i$, $[j,b,d]=i$, $[c,d,b]=g$, $[h,b,d]=g$, $[h,d,b]=a$, $[j,d,b]=a$ & 37\\ 
 \hline
  $10_3$ & Figure~\ref{fig:10_3} & $a, \dots, j$ & $[e,a,c]=b$, $[e,c,a]=d$, $[c,b,d]=f$, $[a,d,b]=f$, $[j,c,a]=b$, $[j,a,c]=d$, $[c,d,b]=i$, $[b,a,i]=d$, $[h,c,a]=b$, $[h,g,c]=b$, $[h,a,g]=b$ & \textbf{14}\\ 
 \hline
 $-10_3$ & Figure~\ref{fig:10_3} $\quad$ Reversed & $a,\dots, j$ & $[b,c,a]=e$, $[d,a,c]=e$, $[f,d,b]=c$, $[f,b,d]=a$, $[b,a,c]=j$, $[d,c,a]=j$, $[i,b,d]=c$, $[d,i,a]=b$, $[b,a,c]=h$, $[b,c,g]=h$, $[b,g,a]=h$ & \textbf{10}\\ 
 \hline
\end{tabular}
\end{table}

\begin{figure}
\centering
\includegraphics[width=.4\textwidth]{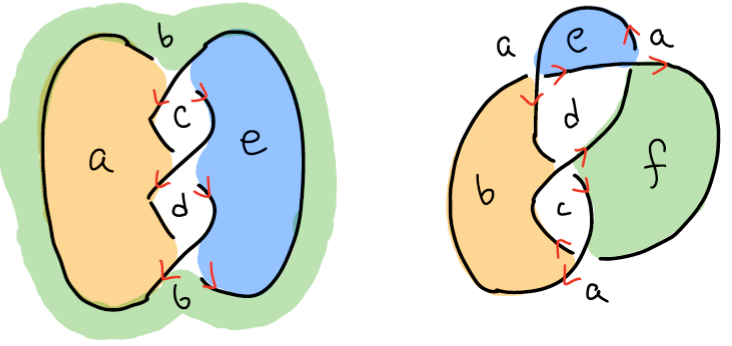}
\caption{$8_1$ and $10_1$ are isotopic to the spin of the trefoil (left) and figure-eight (right) knots, respectively. So their coloring invariants can be computed using Theorem~\ref{thm:spun_knots}.}
\label{fig:trefoil}
\end{figure}

\begin{figure}
\centering
\includegraphics[width=.6\textwidth]{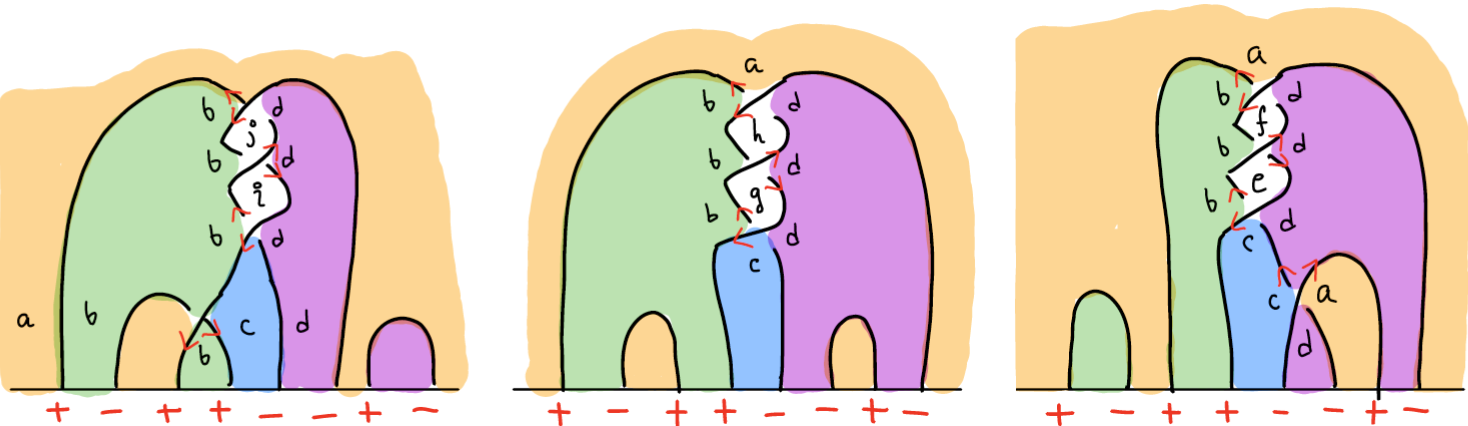}
\caption{$10_2$}
\label{fig:10_2}
\end{figure}

\begin{figure}
\centering
\includegraphics[width=.6\textwidth]{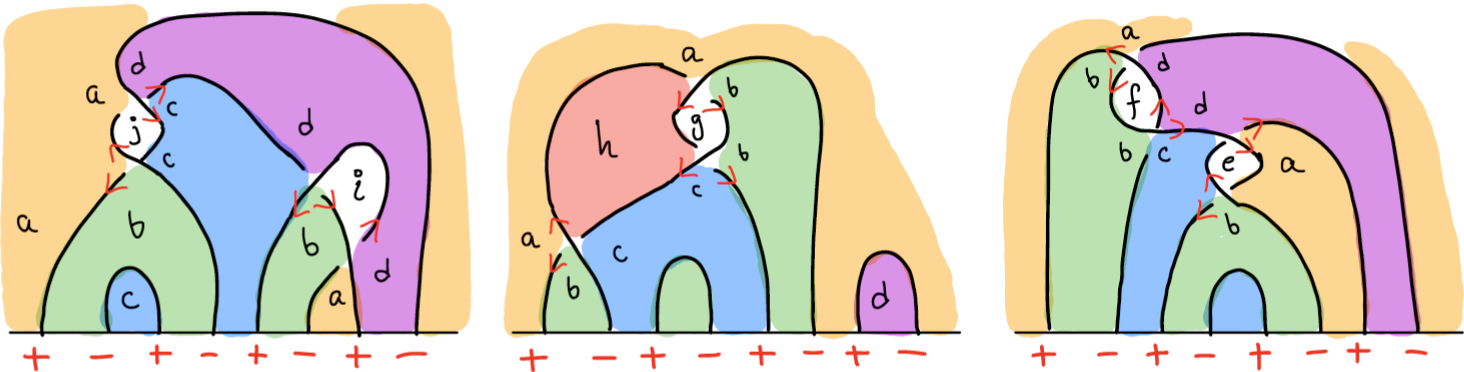}
\caption{$10_3$}
\label{fig:10_3}
\end{figure}

\begin{example}
The surface links $8^{1,1}_1$ and $10^{1,1}_1$ are two-component tori links with the same number of region colorings for any finite tribracket $X$. To see this, consider $8^{1,1}_1$ with the orientation as in Figure~\ref{fig:8_11_1}. The set of region colorings is equal to all the tuples $(a,\dots, f)\in X^6$ satisfying the equations 
$
d=e=f=[a,b,c]=[a,c,b].
$
In particular, $d$, $e$, and $f$, depend uniquely from $(a,b,c)$ and 
\[\col(8^{1,1}_1)=\#\left\{(a,b,c)\in X^3: [a,b,c]=[a,c,b]\right\}.\]
Now consider a different orientation of $8^{1,1}_1$ by changing the blue signs in Figure~\ref{fig:8_11_1}. We obtain a different set of equations on the same tuples $(a,\dots, f)\in X^6$ given by 
\[ 
b=[c,a,f], \text{ } b=[c,f,a], \text{ } b=[c,a,e], \text{ } b=[c,e,a], \text{ } b=[c,a,d], \text{ } b=[c,d,a]. 
\]
By condition (1) of the definition of tribracket, $[c,a,e]=[c,a,f]=[c,a,d]$ implies that $d=e=f$. Thus, $\col(8^{1,1}_1)$ is given by the same set as above $\#\{(a,b,c)\in X^3: [a,b,c]=[a,c,b]\}$. One can check that the same equation holds for the two other orientations of the surface link. 

On the other hand, the link of tori $10^{1,1}_1$ in Figure~\ref{fig:10_11_1} satisfies
\[ 
\col(10^{1,1}_1) = \#\left\{(a,b,c, d)\in X^4: c=[a,b,d]=[a,d,b]\right\} = \#\left\{(a,b,d)\in X^3: [a,b,d]=[a,d,b]\right\}.
\]
The same holds for the other three orientations of $10^{1,1}_1$. 
\end{example}

\begin{figure}[h]
\centering
\includegraphics[width=.6\textwidth]{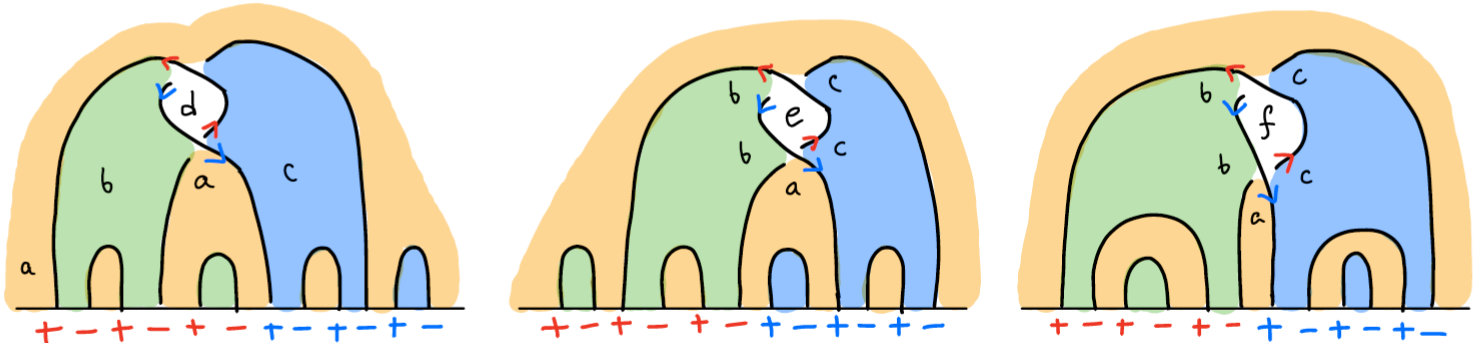}
\caption{$8^{1,1}_1$}
\label{fig:8_11_1}
\end{figure}

\begin{figure}[h]
\centering
\includegraphics[width=.6\textwidth]{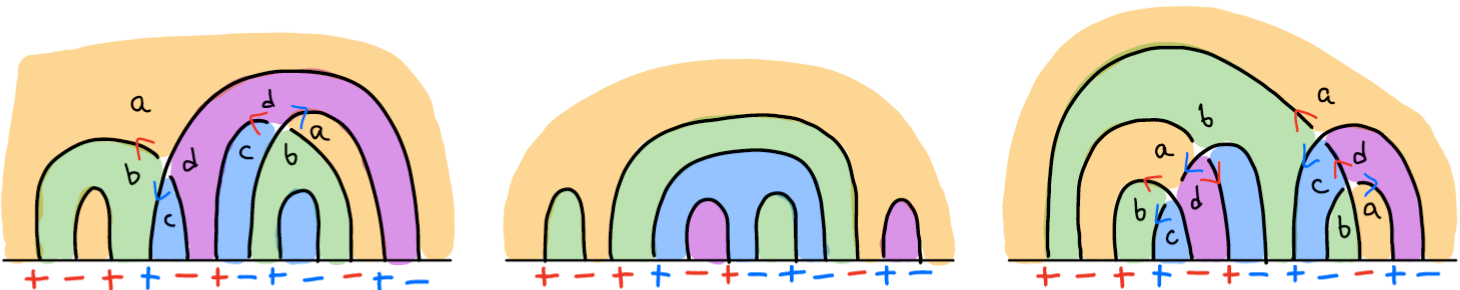}
\caption{$10^{1,1}_1$}
\label{fig:10_11_1}
\end{figure}

\section{Questions}\label{sec:questions}
We include some questions or directions for future works. 
\begin{itemize}
\item Besides abelian tribrackets, for what families of tribrackets are there closed formulae for $\col(F)$? 

\item Are there tribrackets that capture known surface-link invariants? See~\cite{conway_invariants} for a survey.

\item Sato and Tanaka proved a quandle version Proposition~\ref{prop:triplane_inequality_saddle} \cite{kei_trisections}. Is it known which inequality offers more information for the bridge index of $F$?

\item From the work of Sato and Tanaka, and Joseph and Pongtnapaisan, the quandle colorings of even twist spun knots are not that different from those of spun knots~\cite{kei_trisections, Joseph_MRC}. Is there a formula for $\col(\Scal_2m(K))$ for $m\neq 0$ similar to the one in Theorem~\ref{thm:spun_knots}?

\item What is the analog of Propositions~\ref{prop:movie_inequality_saddle} and \ref{prop:triplane_inequality_saddle} for the broken surface diagrams? Is there a relation between $\col(F)$ and the triple point number?

\item How to compute $\col(F)$ using a braid chart?

\item Can tribracket colorings be used to restrict the topology of Seifert solids of 2-knots?

\item How does $\col$ changes under 1-handle addition? 
\end{itemize}
\bibliographystyle{alpha}
\bibliography{sources}

\appendix
\section{Computing Table~\ref{table}}\label{sec:code}
\noindent To promote the transparency of our computations, we include the functions we used to complete Table~\ref{table}.

\begin{lstlisting}[language=Python,breaklines=true]
from itertools import product

## Equations of Trefoil
def col_trefoil(X,n):
  counteR=0
  for a, b, c, d, e in product(range(n), repeat=5):
    #(a,...,e) runs over [0,n]^5
    if X[a][b][c] % n == e and X[a][c][d] % n == e and X[a][d][b] % n == e :
      counteR+=1
  return(counteR)

#reversed orientation
def col_trefoil_rev(X,n):
  counteR=0
  for a, b, c, d, e in product(range(n), repeat=5):
    if a == X[e][c][b] % n and a == X[e][d][c] % n and a == X[e][b][d] % n:
      counteR+=1
  return(counteR)

#FIGURE-EIGHT EQUATIONS
def col_fig_eight(X,n):
  counteR=0
  for a, b, c, d, e, f in product(range(n), repeat=6):
    if a == X[c][b][f] % n and d == X[c][f][b] % n and e == X[b][a][d] % n and e == X[f][d][a] % n:
      counteR+=1
  return(counteR)

## reversed orientations
def col_fig_eight_rev(X,n):
  counteR=0
  for a, b, c, d, e, f in product(range(n), repeat=6):
    if X[a][f][b] % n == c and X[d][b][f] % n == c and X[e][d][a] % n == b and X[e][a][d] % n == f :
      counteR+=1
  return(counteR)

## 9_1 EQUATIONS
def col_9_1(X,n):
  counteR=0
  for a, b, c, d, e, f, g, h in product(range(n), repeat=8):
    if d == X[a][c][b] % n and d == X[e][b][c] % n and f == X[e][c][b] % n and f == X[a][c][c] % n and f == X[g][c][b] % n and h == X[g][b][c] % n and h == X[a][c][b] % n:
      counteR+=1
  return(counteR)

## reverse orientation
def col_9_1_rev(X,n):
  counteR=0
  for a, b, c, d, e, f, g, h in product(range(n), repeat=8):
    if X[d][b][c] % n == a and X[d][c][b] % n == e and X[f][b][c] % n == e and X[f][c][c] % n == a and X[f][b][c] % n == g and X[h][c][b] % n == g and X[h][b][c] % n == a:
      counteR+=1
  return(counteR)

## 10_2 EQUATIONS
def col_10_2(X,n):
  counteR=0
  for a, b, c, d, e, f, g, h, i, j in product(range(n), repeat=10):
    if X[e][b][d] % n == c and X[a][d][d] % n == c and X[a][b][b] % n == c and X[e][d][b] % n == f and X[a][b][d] % n == f and X[i][b][d] % n == c and X[g][b][d] % n == c and X[i][d][b] % n == j and X[a][b][d] % n == j and X[g][d][b] % n == h and X[a][b][d] % n == h :
      counteR+=1
  return(counteR)

## reverse orientation
def col_10_2_rev(X,n):
  counteR=0
  for a, b, c, d, e, f, g, h, i, j in product(range(n), repeat=10):
    if X[c][d][b] % n == e and X[c][d][d] % n == a and X[c][b][b] % n == a and X[f][b][d] % n == e and X[f][d][b] % n == a and X[c][d][b] % n == i and X[j][b][d] % n == i and X[c][d][b] % n == g and X[h][b][d] % n == g and X[h][d][b] % n == a and X[j][d][b] % n == a :
      counteR+=1
  return(counteR)

## 10_3 EQUATIONS
def col_10_3(X,n):
  counteR=0
  for a, b, c, d, e, f, g, h, i, j in product(range(n), repeat=10):
    if X[e][a][c] % n == b and X[e][c][a] % n == d and X[c][b][d] % n == f and X[a][d][b] % n == f and X[j][c][a] % n == b and X[j][a][c] % n == d and X[c][d][b] % n == i and X[b][a][i] % n == d and X[h][c][a] % n == b and X[h][g][c] % n == b and X[h][a][g] % n == b:
      counteR+=1
  return(counteR)

## reverse orientation
def col_10_3_rev(X,n):
  counteR=0
  for a, b, c, d, e, f, g, h, i, j in product(range(n), repeat=10):
    if X[b][c][a] % n == e and X[d][a][c] % n == e and X[f][d][b] % n == c and X[f][b][d] % n == a and X[b][a][c] % n == j and X[d][c][a] % n == j and X[i][b][d] % n == c and X[d][i][a] % n == b and X[b][a][c] % n == h and X[b][c][g] % n == h and X[b][g][a] % n == h:
      counteR+=1
  return(counteR)

## Tribracket from Example 8 of https://arxiv.org/pdf/2103.02704
Nn=3
Xx=[[
        [1,3,2],
        [3,2,1],
        [2,1,3]
    ],[
        [3,2,1],
        [2,1,3],
        [1,3,2]
    ],[
        [2,1,3],
        [3,2,1],
        [1,3,2]
    ]]
\end{lstlisting}
$\quad $\\
Rom\'an Aranda, {University of Nebraska-Lincoln, Lincoln, NE 68588}\\
Email: \texttt{romanaranda123@gmail.com}\\
URL: \url{https://romanaranda123.wordpress.com/}\\
$\quad$ \\
Noah Crawford, {University of Nebraska-Lincoln, Lincoln, NE 68588}\\
Email: \texttt{ncrawford12@huskers.unl.edu}\\ 
$\quad$ \\
Andrew Maas, {University of Nebraska-Lincoln, Lincoln, NE 68588}\\
Email: \texttt{maas.andrew23@gmail.com}\\ 
$\quad$ \\
Nicole Marienau, {University of Nebraska-Lincoln, Lincoln, NE 68588}\\
Email: \texttt{nmarienau@gmail.com}\\ 
$\quad$ \\
Erica Pearce, {University of Nebraska-Lincoln, Lincoln, NE 68588}\\
Email: \texttt{ericapearce9@gmail.com}\\ 
$\quad$ \\
Renzo Sarreal, {University of Nebraska-Lincoln, Lincoln, NE 68588}\\
Email: \texttt{renzosarreal@gmail.com}\\ 
$\quad$ \\
Savannah Schutte, {University of Nebraska-Lincoln, Lincoln, NE 68588}\\
Email: \texttt{sschutte2004@gmail.com}\\ 
$\quad$ \\
Ransom Sterns, {University of Nebraska-Lincoln, Lincoln, NE 68588}\\
Email: \texttt{ransom.Sterns@gmail.com}\\ 
$\quad$ \\
Eric Woods, {University of Nebraska-Lincoln, Lincoln, NE 68588}\\
Email: \texttt{ewoods3@huskers.unl.edu}\\ 
\end{document}